%%%%%%%%%%%%%%%%%%%%%%% file template.tex %%%%%%%%%%%%%%%%%%%%%%%%%
%
% This is a general template file for the LaTeX package SVJour3
% for Springer journals.          Springer Heidelberg 2010/09/16
%
% Copy it to a new file with a new name and use it as the basis
% for your article. Delete % signs as needed.
%
% This template includes a few options for different layouts and
% content for various journals. Please consult a previous issue of
% your journal as needed.
%
%%%%%%%%%%%%%%%%%%%%%%%%%%%%%%%%%%%%%%%%%%%%%%%%%%%%%%%%%%%%%%%%%%%
%
% First comes an example EPS file -- just ignore it and
% proceed on the \documentclass line
% your LaTeX will extract the file if required
% [arxiv_v2: filecontents example.eps stripped, 188 chars]
\RequirePackage{fix-cm}
\documentclass[smallextended]{svjour3}       % onecolumn (second format)
\smartqed  % flush right qed marks, e.g. at end of proof
\usepackage{graphicx}
%
% \usepackage{mathptmx}      % use Times fonts if available on your TeX system
%
% insert here the call for the packages your document requires
%\usepackage{latexsym}
% etc.

\usepackage{enumerate}
\usepackage{mathbbol}
\usepackage{amssymb}
\usepackage{braket}
\usepackage{longtable}
\usepackage[colorlinks]{hyperref}
\usepackage{caption}
\usepackage{mathtools}
\usepackage{microtype}
%
% please place your own definitions here and don't use \newcommand but
% \newcommand{}{}

\newcommand{\Z}{\mathbb{Z}}
  % The real numbers.

%\newcommand{\set}[1]{\left\{ #1 \right\}}
 %\DeclarePairedDelimiterX\set[1]\lbrace\rbrace{\setaux#1}
 %\newcommand\setaux#1|{#1\;\delimsize\vert\;}

\newcommand{\bs}{\backslash}

\renewcommand\AA{{\mathcal A}}

\newcommand\LL{{\mathcal L}}

 %mathbbol package
  %mathbbol package

\newcommand\acl{\hbox{\rm acl}}

 %Uses T1 encoding: \usepackage[T1]{fontenc}

%\newcommand\implies{\Rightarrow}

\newcommand\<{\langle}
\renewcommand\>{\rangle}

\newcommand{\tp}{\mathrm{tp}}

\newcommand\abar{\bar{a}}

\newcommand\bbar{\bar{b}}
\newcommand\cbar{\bar{c}}
\newcommand\dbar{\bar{d}}
\newcommand\ebar{\bar{e}}

\newcommand\mbar{\bar{m}}

\newcommand\ubar{\bar{u}}
\newcommand\vbar{\bar{v}}
\newcommand\wbar{\bar{w}}
\newcommand\xbar{\bar{x}}
\newcommand\ybar{\bar{y}}
\newcommand\zbar{\bar{z}}
\newcommand\A{{\mathcal A}}

\newcommand\E{{\mathcal E}}

\newcommand\myrestriction{\mathord\restriction}
\newcommand{\mr}[1]{\myrestriction_{#1}}
\def\stp{{\rm stp}}
\def\tp{{\rm tp}}
 
 \newtheorem{fact}[lemma]{Fact}
 \newenvironment{claimproof}[1][Proof of Claim]{\begin{proof}[#1]}{\end{proof}}
 
%
% Insert the name of "your journal" with
% \journalname{myjournal}
%
\begin{document}

\title{Mutual algebraicity and cellularity\thanks{Second author partially supported
by NSF grant DMS-1855789}
%about the article that should go on the front page should be
%placed here. General acknowledgments should be placed at the end of the article.}
}
%\subtitle{Do you have a subtitle?\\ If so, write it here}

%\titlerunning{Short form of title}        % if too long for running head

\author{Samuel Braunfeld        \and
        Michael C. Laskowski %etc.
}

%\authorrunning{Short form of author list} % if too long for running head

\institute{S. Braunfeld \at
              4176 Campus Dr, College Park, MD 20742 \\
              %Tel.: +123-45-678910\\
              %Fax: +123-45-678910\\
              \email{sbraunf@umd.edu}           %  \\
%             \emph{Present address:} of F. Author  %  if needed
           \and
           M. Laskowski \at
              4176 Campus Dr, College Park, MD 20742 \\
              \email{laskow@umd.edu}  
}

\date{Received: date / Accepted: date}
% The correct dates will be entered by the editor

\maketitle

\begin{abstract}
We prove two results intended to streamline proofs about cellularity that pass through mutual algebraicity. First, we show that a countable structure $M$ is cellular if and only if $M$ is $\omega$-categorical and mutually algebraic. Second, if a countable structure $M$ in a finite relational language is mutually algebraic non-cellular, we show it admits an elementary extension adding infinitely many infinite MA-connected components. 

Towards these results, we introduce MA-presentations of a mutually algebraic structure, in which every atomic formula is mutually algebraic. This allows for an improved quantifier elimination and a decomposition of the structure into independent pieces. We also show this decomposition is largely independent of the MA-presentation chosen.
\keywords{model theory \and omega-categoricity \and stable theories \and hereditary classes}
% \PACS{PACS code1 \and PACS code2 \and more}
 \subclass{03C15 \and 03C45}
\end{abstract}

\section{Introduction}

 Cellular structures are a particularly simple class of countable structures that appear as a dividing line in many combinatorial problems concerning countable structures or hereditary classes of finite structures. Cellularity (Definition~\ref{def:cell}) plays a  leading role when counting the number of countable structures of a given age \cite{MPW}, counting the finite models of a given size in a hereditary class \cite{LT2}, or counting the number of substructures of a countable structure up to isomorphism \cite{LM}. It is usually easy to check that cellular structures are well-behaved in these problems, so the work consists of showing that the non-cellular case exhibits the appropriate ``many models'' behavior. The easiest strategy is seemingly to pass through the weaker notion of mutual algebraicity (Definition~\ref{def:madef}) splitting the problem into the non-mutually algebraic case and the mutually algebraic but non-cellular case. This strategy is explicitly employed in the companion paper \cite{BLsib} to count the number of structures bi-embeddable with a given countable structure.  In this paper, we provide results describing the gap between mutual algebraicity and cellularity, for use in the second of the cases above.
 
 %A characterization of mutual algebraicity, implying that in many settings non-mutually algebraic structures are complicated, was given in \cite{LT1}. However, in most cases concerning the combinatorics of countable structures or of hereditary classes, the dividing line in ``many models'' behavior is not given by mutual algebraicity, but by the stronger condition of cellularity. Examples include counting the number of countable structures of a given age \cite{MPW}, counting the finite models of a given size in a hereditary class \cite{LT2}, and counting the number of substructures of a countable structure up to isomorphism \cite{LM}. 
 
% In this paper, we provide results describing the gap between mutual algebraicity and cellularity, leading to a general strategy for showing cellularity is a dividing line in a given problem. This strategy is explicitly employed in the companion paper \cite{BLsib} to count the number of structures bi-embeddable with a given countable structure.
 In our strategy for problems concerning cellularity, the first step is to produce complicated behavior in the non-mutually algebraic case, likely using the Ryll-Nardzewski-type characterization of mutual algebraicity in a finite relational language from \cite{LT1}. Our first result shows that in the $\omega$-categorical setting, this is already enough.
 
\begin{theorem}[Theorem \ref{thm:arbitrary}]
 Suppose $M$ is a countable structure in a countable language.  Then $M$ is cellular if and only if $M$ is mutually algebraic and $\omega$-categorical.
\end{theorem}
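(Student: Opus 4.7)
One direction is routine: a cellular structure $M$ is $\omega$-categorical by Ryll-Nardzewski applied to its cell decomposition, and it is mutually algebraic because, after naming the finitely many parameters defining the cells, every definable set is controlled by the cell partition and equality, both of which are algebraic in each variable.

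For the substantive direction, assume $M$ is $\omega$-categorical and mutually algebraic. The plan is to pass to an MA-presentation of $M$, so that every atomic formula is mutually algebraic, and then exploit both the decomposition of $M$ into MA-connected components and the independence of this decomposition across components. The argument proceeds in three steps: show that each MA-component is finite, bound the number of isomorphism types of components, and upgrade the component decomposition to a cellular one.

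By $\omega$-categoricity, $\acl^M(a)$ is finite for every singleton $a \in M$. Any MA-component containing $a$ sits inside $\acl^M(a)$, since it is built up from $a$ by successive MA-formula instances, each of which keeps us inside the algebraic closure. Thus every MA-component is finite. Applying $\omega$-categoricity again, there are only finitely many isomorphism types of MA-components. Let $A = \acl^M(\emptyset)$; this is finite and consists precisely of those MA-components sitting in finite $\mathrm{Aut}(M)$-orbits. After naming $A$, the remaining MA-components lie in finitely many infinite $\mathrm{Aut}(M/A)$-orbits, and grouping by isomorphism type yields a partition $M \setminus A = E_1 \sqcup \dots \sqcup E_k$, where each $E_j$ is an infinite disjoint union of isomorphic copies of a fixed finite structure $T_j$.

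To conclude, the improved quantifier elimination provided by the MA-presentation shows that the type of any finite tuple over $A$ is determined by its restrictions to the MA-components it meets, since atomic formulas relate only elements of a common component. This rigidity forces any bijection of $M$ fixing $A$ pointwise, preserving each $E_j$ setwise, and acting on the components of $E_j$ by a wreath-product of isomorphisms, to be an automorphism, which is the cellularity condition. The main obstacle I anticipate is aligning this component-wise description with the paper's specific Definition~\ref{def:cell} of cellularity, and carefully verifying that $\acl^M(\emptyset)$ is exactly the union of finite-orbit MA-components. Both should follow from the independence and rigidity results for MA-presentations developed earlier in the paper, but they require careful bookkeeping to rule out hidden interactions between distinct components.
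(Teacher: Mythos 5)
Your forward direction and your outline for the converse are essentially the paper's Lemma~\ref{lem:three} and Proposition~\ref{prop:finpresented}, but there is a genuine gap: nothing in your argument confronts the fact that the language is merely \emph{countable}, and that is the entire content of the paper's proof of this theorem. Your very first move, ``pass to an MA-presentation of $M$,'' is the problem. An acceptable set $\A$ witnessing an MA-presentation must express every relation of $\LL$ as a boolean combination of quantifier-free mutually algebraic $\LL_M$-formulas; when $\LL$ is infinite this generally forces $\A$, and hence the parameter set $D(\A)$, to be infinite. Naming infinitely many constants destroys $\omega$-categoricity, so your subsequent appeals to $\omega$-categoricity (finiteness of $\acl(a)$, finitely many isomorphism types of components) are being made in a structure that may no longer be $\omega$-categorical; moreover $\acl_{M_\A}(\emptyset)=\acl_M(D(\A))$ may be infinite, so the set you call $A$ cannot serve as the finite kernel $K$ required by Definition~\ref{def:cell}. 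Finally, the transfer of cellularity between associated structures (Lemma~\ref{lem:slide}) is proved only for finite relational languages, so even a cellular decomposition of $M_\A$ would not automatically descend to one of $M$. Example~\ref{ex:unary} is a warning that in an infinite language, uniform finiteness of components and of $\acl(a)$ does not by itself yield cellularity; your hypothesis of $\omega$-categoricity is what must rule this out, and you never isolate where it does so.

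The missing idea is the reduction to a finite relational language. The paper does this by observing that mutual algebraicity gives weak minimality, hence superstability, so Lachlan's theorem makes the $\omega$-categorical theory $Th(M)$ $\omega$-stable; the classical structure theory for $\omega$-categorical $\omega$-stable theories then yields a finite relational reduct $M_0$ with which $M$ is interdefinable. Only after this reduction does the MA-presentation machinery apply (with a \emph{finite} acceptable set and finitely many new constants), at which point your component analysis is exactly Theorem~\ref{thm:finiterelational}. If you supply that reduction step, your sketch becomes a correct proof; without it, the argument does not go through as written.
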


The next step of the strategy is to show that if $M$ is mutually algebraic but not cellular, then models of its theory are still sufficiently complex.  This is the main part of the paper.
At first blush, it is not clear that cellularity and mutual algebraicity are related.  By definition, a cellular structure $M$ admits a natural partition into finite pieces.
Depending on the language a mutually algebraic structure $M$ is presented in, such a partition might not be readily describable.  
 As examples, if we take $T$ to be the theory of $(\Z,succ)$, the theory of the integers with a binary successor relation, then any model $M$ of $T$ is mutually algebraic, and $M$ is naturally partitioned  into its `$\Z$-classes'.  The salient feature is that the binary successor relation is mutually algebraic, i.e. essentially bounded degree.
By contrast, the arguably simpler theory of $Th(M,E)$, where $E$ is an equivalence relation on $M$ with two classes, both infinite, is mutually algebraic (in fact, cellular), yet the atomic formula $E(x,y)$ is not mutually algebraic.  In this case, the cellular partition of $M$ is into singletons, with singletons from the two classes distinguished, which requires naming a constant.
In Section~2, we introduce the notion of an {\em MA-presentation}, in which all atomic relations are mutually algebraic.  We show that MA-presented structures $M$ admit a stronger quantifier elimination result and there is a natural equivalence relation $\sim$ that partitions $M$ as in the definition of cellularity.  
We also define a notion of two structures with the same universe but in different languages to be {\em associated} and we prove that every mutually algebraic structure is associated to an MA-presented structure.  Although there may be several MA-presented structures associated to a given mutually algebraic $M$, the corresponding partitions into $\sim$-classes
are largely the same.  

In Section 3, we use the fact that any mutually algebraic $M$ has many associated MA-presented structures to give several characterizations of cellularity, e.g., 
 a mutually algebraic $M$ is cellular if and only if there is a uniform finite bound on the size of $\sim$-classes in some/every associated
MA-presented $M'$.
As a byproduct of this analysis, we prove the following result, which is used by the authors in \cite{BLsib} to obtain ``many models'' behavior for mutually algebraic non-cellular structures, since it shows they will in some sense encode a partition with infinitely many infinite classes in some elementary extension.
 
 \begin{theorem}[Theorem \ref{thm:ma noncell}]
 Let $\LL$ be finite relational, and suppose $M$ is mutually algebraic but non-cellular countable $\LL$-structure. Then there is some $M^* \succ M$ such that $M^*$ contains infinitely many new infinite MA-connected components, pairwise isomorphic over $M$. Furthermore, we may take the universe of $M^*$ to be the universe of $M$ together with these new components.
 \end{theorem}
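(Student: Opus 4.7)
The plan is to first reduce to working inside an MA-presentation of $M$, then use compactness to adjoin infinitely many pairwise-isomorphic infinite $\sim$-classes while keeping $M$ as an elementary substructure, and finally cut down to the desired universe using the improved quantifier elimination from Section~2. The key conceptual input is that in an MA-presentation the theory decomposes across $\sim$-classes, so $\sim$-classes may be freely added or removed without affecting elementary equivalence of the parts that remain.

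Step 1 (reduction). By Section~2, $M$ is associated to an MA-presented structure $M'$ on the same universe. Since ``associated'' preserves the $\emptyset$-definable sets, it suffices to produce the desired elementary extension of $M'$, which we will then regard as an elementary extension of $M$. Because every atomic $M'$-relation is mutually algebraic, atomic formulas never link elements from distinct $\sim$-classes, so the $\sim$-classes in $M'$ are exactly the MA-connected components of $M$. By the characterization of cellularity in Section~3, non-cellularity forces the $\sim$-classes in $M'$ to have no uniform finite size bound.

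Step 2 (one new infinite component). By compactness applied to the elementary diagram of $M'$, using ``$[x]_\sim$ has size $\geq n$'' for each $n$, we obtain $N \succ M'$ containing an infinite $\sim$-class $C$ disjoint from $M'$; here the unbounded-size assumption and the fact that atomic formulas respect $\sim$-classes make the relevant theory finitely satisfiable. Fix an enumeration $C=\{d_k:k<\omega\}$ and iterate once more: in the language $L\cup\{c_{i,k}:i,k<\omega\}$, consider the elementary diagram of $N$ together with axioms saying (i) for each $i$, the map $d_k\mapsto c_{i,k}$ is an $L$-elementary embedding of $C$ into a single new $\sim$-class, (ii) for $i\neq i'$, the $c_{i,k}$ and $c_{i',k'}$ lie in different $\sim$-classes, and (iii) no $c_{i,k}$ is $\sim$-related to any element of $M'$. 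Finite satisfiability reduces to realizing finitely many pairwise-disjoint copies of a finite $L$-substructure of $C$ inside a large enough elementary extension of $N$, which is possible precisely because the improved QE of Section~2 decomposes formulas into conditions internal to single $\sim$-classes. Let $\widehat M$ be a model and set $C_i=\{c_{i,k}^{\widehat M}:k<\omega\}$.

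Step 3 (restriction). Let $M^*$ be the $L$-substructure of $\widehat M$ on $M\cup\bigsqcup_{i<\omega}C_i$. Applying the improved QE again, every $L(M^*)$-formula is equivalent in $\widehat M$ to a boolean combination of formulas each of which is supported on a single $\sim$-class; since all the $\sim$-classes mentioned lie inside $M^*$, we get $M^*\prec\widehat M$, and combined with $M'\prec\widehat M$ this gives $M\prec M^*$. The $C_i$ are infinite MA-connected components of $M^*$ by construction, and are pairwise isomorphic over $M$: because there are no atomic relations between $M$ and any $C_i$, the isomorphism $C_i\to C_{i'}$ supplied by Step~2 extends by the identity on the remainder to an automorphism of $M^*$ fixing $M$ pointwise.

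The main obstacle is verifying in Steps~2 and~3 that the ``decomposition over $\sim$-classes'' given by the improved quantifier elimination of Section~2 is strong enough to (a) justify finite satisfiability when copying the type of $C$ into infinitely many pairwise disjoint components and (b) ensure that restricting $\widehat M$ to $M\cup\bigsqcup_i C_i$ is an elementary substructure. Once this decomposition is in hand the rest is a routine compactness construction; the real content is that the MA-presentation reduces the elementary theory of the whole structure to the theories of its individual $\sim$-classes glued together trivially.
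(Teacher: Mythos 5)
Your overall strategy---pass to an MA-presentation, invoke Proposition~\ref{prop:finpresented} to get unboundedly large $\sim$-classes, use compactness to adjoin an infinite component, then multiply it and restrict the universe---is the same as the paper's. But there is a concrete gap at the heart of your Step~2. The relation $\sim$ is the transitive closure of the mate relation and is not first-order definable, so ``$[x]_\sim$ has size $\ge n$'' is not a formula but an infinite disjunction over chain lengths; worse, the scheme ``for each $n$ there is an $x$ with $|[x]_\sim|\ge n$'' is already satisfied by $M'$ itself (that is exactly what non-cellularity gives), so a model of it need not contain any infinite class. One must name a single new element and force its class to be infinite by genuinely first-order means. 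The paper does this by introducing an explicit infinite chain of new constant tuples $\cbar_i$, each required to satisfy an atomic relation on an initial subtuple, with consecutive subtuples overlapping, all subtuples distinct, and all new constants outside $M$. Finite satisfiability of this chain is then a real counting argument: since atomic relations are mutually algebraic, balls of radius $k$ in the mate-graph have size at most $K^k$, so a component of size greater than $|F|\cdot K^n$ contains a point far from the finitely many parameters appearing in the finite fragment, from which an $n$-link chain avoiding $F$ can be grown. Your proposal contains neither the first-order axiomatization nor this counting step, and the phrase ``the unbounded-size assumption \ldots make[s] the relevant theory finitely satisfiable'' is precisely the part that has to be proved.

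The second half of your Step~2 and your Step~3 are also where you yourself locate ``the main obstacle,'' and they are not routine as stated. The paper sidesteps your second compactness argument entirely: it passes to a sufficiently saturated elementary extension and takes infinitely many realizations of $\tp(c_{0,0}/M)$ pairwise independent over $M$ (legitimate because the theory is weakly minimal and trivial), which automatically yields pairwise disjoint infinite components that are pairwise isomorphic over $M$; and the elementarity of the restriction to $M$ together with these components is not re-derived from the quantifier elimination but is exactly \cite[Proposition~4.2]{MA}. If you wish to keep your route, you must actually establish the ``decomposition of the elementary theory over $\sim$-classes'' that Steps~2 and~3 invoke; as written it is asserted rather than proved, and Proposition~\ref{prop: qe} alone does not immediately deliver it.
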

 
Also in Section 3,  we  obtain other characterizations of cellularity, including the following, which does not mention partitions.
 
  \begin{theorem}[Theorem \ref{thm:mon cat}]
  A countable structure $M$ is monadically $\omega$-categorical if and only if $M$ is cellular.
  \end{theorem}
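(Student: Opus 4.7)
The plan is to prove the two directions separately, invoking Theorem~\ref{thm:arbitrary} for the harder one.

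For the direction \emph{cellular implies monadically $\omega$-categorical}, I would argue that cellularity is preserved under expansions by unary predicates. A cellular $M$, after naming finitely many constants, partitions into finite ``grains'' of uniformly bounded size, with only finitely many isomorphism types. Any expansion by unary predicates $U_1, \ldots, U_k$ simply refines each grain's isomorphism type by a bounded-length coloring of its elements; since grains have bounded size, this still yields only finitely many colored-grain isomorphism types. Hence the expansion remains cellular, and a standard back-and-forth on grains shows cellular structures are $\omega$-categorical.

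For the converse, suppose $M$ is monadically $\omega$-categorical. Taking the empty expansion, $M$ itself is $\omega$-categorical, so by Theorem~\ref{thm:arbitrary} it suffices to show $M$ is mutually algebraic. I would prove the contrapositive: if $M$ is not mutually algebraic, produce a unary expansion of $M$ failing to be $\omega$-categorical. Applying the Ryll--Nardzewski-style characterization of mutual algebraicity from~\cite{LT1} to the $\omega$-categorical structure $M$ yields a formula $\varphi(\bar x, \bar y)$ and an infinite sequence of parameter tuples $\bar a_1, \bar a_2, \ldots$ exhibiting essentially unbounded ``array'' behavior in $\varphi$. I would then mark the entries of a carefully chosen infinite subsequence of the $\bar a_i$ by a single unary predicate $U$, so that within $(M,U)$ the formula $\varphi$ witnesses infinitely many inequivalent $1$-types, contradicting the $\omega$-categoricity of $(M,U)$.

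The main obstacle is this final conversion step: extracting from the abstract failure of mutual algebraicity an infinite sequence of parameters that can be distinguished from one another by $\LL$-formulas once $U$ isolates them as a class. This reduction is essentially combinatorial and should be available from the structure underlying the characterization in~\cite{LT1}; once in hand, Theorem~\ref{thm:arbitrary} closes the argument.
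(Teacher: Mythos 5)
Your easy direction and the overall reduction in the converse (use monadic $\omega$-categoricity to get $\omega$-categoricity of $M$ itself, then apply Theorem~\ref{thm:arbitrary} after showing $M$ is mutually algebraic) match the paper exactly. The gap is precisely the step you yourself flag as ``the main obstacle,'' and it is not a routine combinatorial reduction. There are two problems. First, the Ryll--Nardzewski-style characterization of mutual algebraicity in \cite{LT1} is stated for finite relational languages, whereas here $M$ lives in an arbitrary countable language, so you cannot directly extract the formula $\varphi(\bar x,\bar y)$ and the array of parameter tuples you want. Second, and more seriously, marking the entries of an infinite family of parameter tuples with a single unary predicate $U$ does not by itself produce infinitely many $1$-types: an automorphism of $(M,U)$ could still permute the marked tuples among themselves, so all the marked elements may realize the same $1$-type. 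You need a device that breaks the symmetry between the infinitely many marked pieces, and your sketch does not supply one.

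The paper supplies exactly this missing device. It combines Theorem~3.3(3) of \cite{MA} with Theorem~3.2(4) of \cite{Tri} to show that if $M$ is not mutually algebraic, then some expansion $M^*$ of $M$ by \emph{finitely many} unary predicates has a definable set $D$ and a definable equivalence relation $E$ on $D$ with infinitely many infinite classes $\{C_n : n\in\omega\}$. The symmetry-breaking is then a cardinality trick: add one further unary predicate $U$ containing exactly $n$ elements of $C_n$; an element of $C_n$ then satisfies ``exactly $n$ elements of my $E$-class lie in $U$,'' which yields infinitely many $1$-types and contradicts monadic $\omega$-categoricity. To complete your argument you would need either to import this pair of citations or to prove an analogous statement producing, in some monadic expansion, a definable partition into infinitely many infinite pieces; merely exhibiting an infinite array witnessing the failure of mutual algebraicity is not enough.
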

  
  The paper is largely self-contained.  In some places, there are references made to $\omega$-stable and superstable theories, but most of what is used about them is elementary and can be found in e.g., \cite{TZ}.
 
\section{Mutual algebraicity and MA-presentations}

Throughout this paper, we only consider languages $\LL$ with no function symbols (i.e., $\LL$ only has relation and constant symbols).  As all the properties we consider are preserved by passing between a function and its graph, this is not a significant restriction.

\begin{definition}  \label{def:ONE}
Given a structure $M$, an $n$-ary relation $R(\xbar)$ is {\em mutually algebraic} if there is a constant $K$ such that for each $m \in M$, the number of tuples $\bbar \in M^n$ such that $M \models R(\bbar)$ and $m \in \bbar$ is at most $K$.
\end{definition}

Note that every unary relation is mutually algebraic.

\begin{definition}  \label{def:madef}
Given an $\LL$-structure $M$, let $\LL_M$ be $\LL$ expanded by constant symbols for every element of $M$.
 $M$ is {\em mutually algebraic} if every $\LL_M$-formula is equivalent to a boolean combination of mutually algebraic $\LL_M$ formulas.
\end{definition}

\begin{theorem} [\cite{MA} {Theorem 3.3}, \cite{MA0} {Proposition 4.1, Theorem 4.2}] \label{thm:ma char}
Given an $\LL$-structure $M$, let $\LL_M$ be $\LL$ expanded by constant symbols for every element of $M$. The following are equivalent.
\begin{enumerate}
\item $M$ is mutually algebraic.
\item Every atomic $\LL$-formula is equivalent to a boolean combination of quantifier-free mutually algebraic $\LL_M$-formulas.
\item Every $\LL_M$-formula is equivalent to a boolean combination of $\LL_M$-formulas of the form $\exists \ybar \theta(\xbar, \ybar)$, where $\theta(\xbar, \ybar)$ is quantifier-free mutually algebraic.
\item $M$ is weakly minimal and trivial.
\end{enumerate}

\end{theorem}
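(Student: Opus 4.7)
The plan is to prove the four-way equivalence by establishing a cycle through conditions $(1)$, $(2)$, $(3)$ and separately linking in $(4)$. For $(3) \Rightarrow (1)$, I would argue directly that existential closure of a quantifier-free mutually algebraic formula remains mutually algebraic: if $\theta(\xbar, \ybar)$ is quantifier-free mutually algebraic with bound $K$, then $\exists \ybar\, \theta(\xbar, \ybar)$ is mutually algebraic in $\xbar$ with the same bound $K$, since any $\xbar$ containing a fixed element $m$ and satisfying the existential formula extends to a tuple $\xbar\ybar$ satisfying $\theta$, and there are at most $K$ such extensions through $m$. Boolean combinations then transfer from $(3)$ to $(1)$ without issue. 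The implication $(1) \Rightarrow (2)$ is obtained by restricting the equivalence in $(1)$ to atomic $\LL$-formulas and then using the closure properties of mutual algebraicity under boolean operations together with the fact that atomic formulas carry no quantifier content, to massage the witnessing boolean combination into a quantifier-free one.

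The main work lies in $(2) \Rightarrow (3)$, a quantifier elimination argument by induction on formula complexity. Boolean combinations present no difficulty. The essential case is $\exists z\, \phi(\xbar, z)$ where, by the inductive hypothesis, $\phi$ is already a boolean combination of formulas $\exists \ybar\, \theta(\xbar, z, \ybar)$ with $\theta$ quantifier-free mutually algebraic. After putting $\phi$ in disjunctive normal form and distributing $\exists z$ over the disjunction, I reduce to handling a single conjunction of possibly negated existential mutually algebraic formulas with an outer $\exists z$. The critical subcase is pushing $\exists z$ past negations. Here I exploit the uniform finite-degree bound: for fixed $\xbar$, the set of $z$ participating in any of the underlying mutually algebraic relations around $\xbar$ is finite, so non-existence reduces to a bounded case analysis on equalities among potential witnesses, which can then be folded into a single quantifier-free mutually algebraic formula of the required form.

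For the equivalence with $(4)$, the direction $(1) \Rightarrow (4)$ follows because mutually algebraic atomic relations contribute only finitely many algebraic dependencies over any element, so every instance of forking is controlled by a bounded-degree witness, forcing weak minimality; triviality then follows because any nontrivial dependence among pairwise independent elements would violate the uniform finite-degree bound across the witnessing MA relations. For $(4) \Rightarrow (1)$, weak minimality together with triviality forces the pregeometry on each strong type to be trivial, so one can extract quantifier-free mutually algebraic relations whose boolean combinations define every definable set, recovering $(1)$.

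The principal obstacle will be the quantifier elimination step $(2) \Rightarrow (3)$, and specifically the interaction of $\exists z$ with negated existential mutually algebraic subformulas. The key technical lever throughout is the uniform finite bound provided by mutual algebraicity, which allows existential quantifiers to be traded for finite case analyses on equalities among a bounded set of witnesses.
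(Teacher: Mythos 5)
First, note that the paper does not prove this statement at all: it is quoted as a known result, with the proof delegated to the cited references ([MA, Theorem~3.3] and [MA0, Proposition~4.1, Theorem~4.2]). So there is no in-paper argument to compare against, and your proposal must stand on its own. Parts of it do: the implication $(3)\Rightarrow(1)$ is correct and complete as you state it (the projection of a mutually algebraic relation through a nonempty tuple of retained variables is mutually algebraic with the same bound), and your plan for $(2)\Rightarrow(3)$ --- induction on quantifier complexity, DNF, and a case analysis on whether $\exists z$ meets a positive conjunct that is algebraic in $z$ --- is essentially the argument the paper itself runs in Proposition~\ref{prop: qe} (following [MA0, Theorem~4.2]), though you gloss over the bookkeeping needed to show that the resulting counting quantifiers $\exists^{=r}$ and $\exists^{<r}$ land back in the class $\E^*$ (this is where the paper invokes inclusion--exclusion and Lemma~2.4 of [MA0]).

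The genuine gaps are in $(1)\Rightarrow(2)$ and $(4)\Rightarrow(1)$, and in both cases the one-sentence justification asserts exactly the hard content of the cited theorems. For $(1)\Rightarrow(2)$: Definition~\ref{def:madef} only gives you that an atomic $\LL$-formula is a boolean combination of mutually algebraic $\LL_M$-formulas \emph{of arbitrary quantifier complexity}. The observation that the atomic formula itself ``carries no quantifier content'' says nothing about the quantifier complexity of the mutually algebraic formulas witnessing the equivalence, and there is no elementary ``massaging'' that replaces a quantified mutually algebraic formula by a boolean combination of quantifier-free ones; that reduction is essentially the whole theorem. Similarly, your $(4)\Rightarrow(1)$ asserts in one clause that from weak minimality and triviality ``one can extract quantifier-free mutually algebraic relations whose boolean combinations define every definable set'' --- this is the near-model-completeness theorem for trivial weakly minimal theories, the main result of [MA0], whose proof is a substantial induction through forking and strong types over models, not a formal consequence of the pregeometry being trivial. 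Your $(1)\Rightarrow(4)$ sketch also skips a necessary first step: one must establish (super)stability of $Th(M)$ from the syntactic hypothesis before ``forking'' and ``weak minimality'' even make sense, typically by counting types using the bounded-degree property. As written, the cycle $(1)\Rightarrow(2)\Rightarrow(3)\Rightarrow(1)$ is broken at its first link, and the bridge to $(4)$ is asserted rather than proved; the honest options are to reproduce the arguments of [MA0] or to cite them, as the paper does.
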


While Theorem~\ref{thm:ma char} is useful, the dependence on $\LL_M$--formulas can be awkward in applications.  
To alleviate this, we introduce a stronger notion of an MA-presented
structure and with Proposition~\ref{prop: qe}, we see that such a structure admits a better reduction of quantifier complexity.   
However, as we will see in Lemma~\ref{lem:basic}, every mutually algebraic structure $M$ is {\em associated} (see Definition~\ref{def:associated}) to an MA-presented structure $M'$ in a new language.

\subsection{MA-presented structures}

\begin{definition} An $\LL$-structure $M$ is {\em MA-presented} if every atomic $\LL$-formula is mutually algebraic.
$M$ is {\em finitely MA-presented} if, in addition, $\LL$ is finite relational.
\end{definition}

We begin by bounding the quantifier complexity of formulas in an MA-presented structure.

\begin{definition}
Let $\E=\{\exists\zbar\theta(\ybar,\zbar),$ where $\theta(\ybar,\zbar)$ is quantifier-free and mutually algebraic $\LL$-formula$\}$.  We allow $\lg(\zbar)=0$, so as $M$ is MA-presented,
every atomic $\LL$-formula is in $\E$.

Let $\E^*:=\{$all $\LL$-formulas equivalent to boolean combinations and adjunctions of free variables of formulas from $\E\}$. 
As every $\gamma(\xbar)\in\E$ is mutually algebraic, the set $\E$ of formulas is {\bf not} closed under adjunction of dummy variables, although $\E^*$ is.
\end{definition}

We note a dichotomy among partitioned formulas $\beta(x,\ybar):=\exists\zbar\theta(x,\ybar,\zbar)\in\E$ with $\lg(x)=1$:
On one hand, if both $\lg(\ybar)=\lg(\zbar)=0$, i.e., if $\beta(x):=\theta(x)$ is quantifier-free and mutually algebraic, then $\neg\beta(x)$ is also quantifier-free, mutually algebraic
(recall that every formula in one free variable is mutually algebraic) hence $\neg\beta\in\E$.  On the other hand, if at least one of $\lg(\ybar),\lg(\zbar)>0$, then for some $K\in\omega$,
$M\models\forall\ybar\exists^{\le K}x\beta(x,\ybar)$, i.e., $\neg\beta(M,\cbar)$ is cofinite for all $\cbar\in M^{\lg(\ybar)}$.

\begin{proposition} \label{prop: qe}  Suppose $M$ is an infinite, MA-presented $\LL$-structure.  Then every $\LL$-formula is equivalent to a formula in $\E^*$.
\end{proposition}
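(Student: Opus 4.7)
I plan to prove Proposition \ref{prop: qe} by induction on the structure of the $\LL$-formula $\phi$. For the base case, every atomic $\LL$-formula is mutually algebraic by the MA-presentation assumption, so it lies in $\E$ with $\zbar$ empty; and $\E^*$ is closed under boolean combinations by definition, handling the boolean step. The only real work is to show that $\E^*$ is closed under existential quantification, so assume $\psi(y,\xbar)\in\E^*$ and consider $\exists y\,\psi$. Putting $\psi$ into disjunctive normal form over $\E$-literals and distributing $\exists y$ across $\vee$, it suffices to show that
\[
\Phi(\xbar)\;:=\;\exists y\Bigl[\,\bigwedge_{k}\epsilon_k^+(y,\xbar)\wedge\bigwedge_{l}\neg\epsilon_l^-(y,\xbar)\,\Bigr]
\]
lies in $\E^*$, where each $\epsilon\in\E$ essentially involves $y$ (conjuncts not involving $y$ factor out of $\exists y$).

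Using the dichotomy preceding the proposition, each $\E$-conjunct is of either type~A (involving only $y$, hence a unary formula) or type~B (essentially involving some variable of $\xbar$, in which case the shared nonempty tuple $\xbar$ together with MA of the matrix yields a uniform bound $|\epsilon(M,\cbar)|\le K$). I collect all type~A conjuncts, positive and negative, into a single unary formula $\alpha(y)$. The main subcase is when some positive conjunct $\epsilon_{k_0}^+$ is of type~B: then the positive-constraint set
\[
S(\cbar)\;:=\;\{y:\alpha(y)\wedge\bigwedge_{k\in P_B}\epsilon_k^+(y,\cbar)\}
\]
(with $P_B$ indexing the type~B positive conjuncts) is uniformly finite of size at most $K$, and $\Phi(\cbar)$ holds iff $S(\cbar)\not\subseteq\bigcup_{l\in N_B}\epsilon_l^-(M,\cbar)$, where $N_B$ indexes the type~B negative conjuncts. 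By inclusion--exclusion this condition becomes a boolean combination of cardinality statements of the form $|\{y:\chi_J(y,\cbar)\}|=r$, where $\chi_J(y,\xbar):=\alpha(y)\wedge\bigwedge_{k\in P_B}\epsilon_k^+(y,\xbar)\wedge\bigwedge_{j\in J}\epsilon_j^-(y,\xbar)$ ranges over $J\subseteq N_B$ (using the $\epsilon_j^-$'s positively).

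The crux is to show each counting formula $\exists^{\ge r}y\,\chi_J$ lies in $\E$. Unfolding each $\epsilon=\exists\zbar\,\theta$ and pulling all witness quantifiers outside turns $\exists^{\ge r}y\,\chi_J$ into a single existential whose QF matrix is a conjunction of MA $\theta$-pieces (all sharing the nonempty tuple $\xbar$), unary $\alpha(y_i)$-constraints, and pairwise distinctness constraints $y_i\ne y_{i'}$. The conjunction of the $\theta$-pieces is MA by the standard common-variable argument (fixing any element $m$ of $\xbar$ bounds every $\theta$-piece, hence the whole conjunction), and the unary and distinctness constraints only shrink the tuple-set and so preserve MA. Hence $\exists^{\ge r}y\,\chi_J\in\E$, $\exists^{=r}y\,\chi_J\in\E^*$, and the inclusion--exclusion combination yields $\Phi\in\E^*$. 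The residual cases, where no positive conjunct is type~B, reduce to $\Phi=\exists y[\alpha(y)\wedge\bigwedge_l\neg\epsilon_l^-]$: if no type~B negative conjunct survives either, $\Phi$ is a sentence; if $\alpha(M)$ is infinite, $\Phi\equiv\top$ since a uniformly finite union $\bigcup_l\epsilon_l^-(M,\cbar)$ cannot cover an infinite set; and if $\alpha(M)$ is finite, the same inclusion--exclusion argument applies. I expect the main obstacle to be the counting step, specifically verifying that the distinctness constraints $y_i\ne y_{i'}$ --- not themselves MA --- do not destroy mutual algebraicity of the matrix; the crucial point is that distinctness is applied to an already-MA set built from the $\theta$-conjunction through the shared nonempty $\xbar$, so the final relation remains bounded-degree.
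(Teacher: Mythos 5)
Your overall route is the same as the paper's: put the matrix in disjunctive normal form over $\E$-literals, reduce to a single existential over a conjunction, observe that any positive conjunct linking $y$ to $\xbar$ confines the candidate witnesses to a uniformly bounded set, and then express ``some candidate avoids all the negative constraints'' by inclusion--exclusion over counting formulas $\exists^{=r}y\,\chi_J$, whose membership in $\E$ rests on the fact that a linked conjunction of quantifier-free mutually algebraic formulas is again mutually algebraic. You also correctly identify and dispose of the worry about the distinctness clauses $y_i\neq y_{i'}$ (a subrelation of a mutually algebraic relation is mutually algebraic).

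There is, however, one step that fails as written: your treatment of the ``type A'' conjuncts. You fold \emph{all} conjuncts whose only free variable is $y$ --- positive and negative --- into a single unary formula $\alpha(y)$, and later assert that unfolding each $\epsilon=\exists\zbar\,\theta$ and ``pulling all witness quantifiers outside'' turns $\exists^{\ge r}y\,\chi_J$ into a single existential with a quantifier-free matrix. A unary conjunct may have the form $\exists\zbar\,\theta(y,\zbar)$ with $\lg(\zbar)>0$; when such a conjunct occurs \emph{negatively} inside $\alpha$, its quantifier sits under a negation and cannot be pulled out, so the resulting matrix is not quantifier-free and $\exists^{\ge r}y\,\chi_J$ is not exhibited as a member of $\E$. (Unary relations are automatically mutually algebraic, but membership in $\E$ is a syntactic condition on the matrix, not just boundedness of the defined set.) The repair uses only tools you already have: do not absorb these negated unary existentials into $\alpha$, but instead add them to the family over which you run inclusion--exclusion, so that they only ever appear positively inside the counting formulas $\chi_J$ --- their matrices then share the variable $y$ with the type~B pieces and the linked-conjunction argument goes through. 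This is in effect what the paper does: it keeps every $\neg\beta_j$ whose instances are cofinite on the ``subtracted'' side and forms the positive conjunctions $\gamma_j:=\alpha^*\wedge\beta_j$, so that only positive existentials ever need to be merged into a single quantifier-free mutually algebraic matrix. With that adjustment (applied also in your residual case where $\alpha(M)$ is finite), your argument matches the paper's.
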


\begin{proof}
This proof is similar to the proof of Theorem~4.2 of \cite{MA0}.  
 Via Disjunctive Normal Form and the dichotomy above, every formula in $\E^*$ in which the variable $x$ appears in each element of $\E$ can be written as
$$\bigvee_k\left[\bigwedge_i \alpha_{i,k}(x,\ybar_{i,k})\wedge\bigwedge_j \neg\beta_{j,k}(x,\ybar_{j,k})\right]$$
where $\neg\beta_{j,k}(M,\cbar)$ is cofinite for all $j,k,$ and $\cbar$.

Note that $\E^*$ contains all atomic $\LL$-formulas and is closed under boolean combinations.  Thus, to prove the Proposition, it suffices to show that $\E^*$ is closed under projections.
By the standard form given above and the fact that $\exists$ commutes with $\bigvee$, it suffices to show that
$$\delta(\ybar):=\exists x \left(\bigwedge_{i}\alpha_i(x,\ybar_i)\wedge\bigwedge_{j=1}^J\neg\beta_j(x,\ybar_j)\right)\in\E^*$$
where each $\alpha_i,\beta_j\in\E$ and where $\neg\beta_j(M,\cbar_j)$ is cofinite for all $j$ and $\cbar_j$ from $M$.
We split into cases.

\medskip\noindent{\bf Case 1.}  There is no $\alpha_i$.
\medskip

In this case, as $M$ is infinite and each $\neg\beta_j(M,\cbar_j)$ is cofinite, $\delta(\ybar):=\exists x \bigwedge_j \neg\beta_j(x,\ybar_j)$ holds for all $\ybar$.

\medskip\noindent{\bf Case 2.}  There is at least one $\alpha_i$.  
\medskip

As the conjunction of elements of $\E$ with a common variable symbol is also in $\E$, we may assume there is exactly one $\alpha^*(x,\ybar^*)$.  There are now two subcases.

\medskip\noindent{\bf Subcase 2a).}  $\ybar^*=\emptyset$ and $\alpha^*(M)$ is infinite.
\medskip

Then as in Case 1, $\delta(\ybar):=\exists x [\alpha^*(x)\wedge\bigwedge_j \neg\beta_j(x,\ybar_j)]$ holds for all $\ybar$.

\medskip\noindent{\bf Subcase 2b).}  Not Subcase 2a).
\medskip

In this case, for some $r^*\in\omega$, $M\models\forall\ybar^*\exists^{\le {r^*}}x\alpha^*(x,\ybar^*)$.
For each $j$, put $\gamma(x,\ybar_j'):=\alpha^*(x,\ybar^*)\wedge\beta_j(x,\ybar_j)$.  Thus,  $\gamma_j(M,\cbar_j')\subseteq\alpha^*(M,\cbar^*)$ for all $j$ and all $\cbar\in M^{\lg(\ybar)}$.
Also, as $\alpha^*$ and $\beta_j$ contain a common free variable, namely $x$, each $\gamma_j(x,\ybar_j')$ is mutually algebraic.
It follows that $\delta(\ybar)$ is equivalent to
$$\bigvee_{1\le r\le r^*}\left(\exists^{=r}x\alpha^*(x,\ybar^*)\wedge \exists^{<r}x \bigvee_{j=1}^J\gamma_j(x,\ybar_j')\right)$$
so it suffices to show that this formula is in $\E^*$.  

As in the proof of Lemma~2.4 of [8], for each $r$ and for each $S\subseteq\{1,\dots,J\}$,
$\exists^{\le r}x\bigwedge_{j\in S}\gamma_j(x,\ybar_j')\in\E$.  From this, it follows easily that $\exists^{=r}x\bigwedge_{j\in S}\gamma_j(x,\ybar_j')$ and 
$\exists^{<r}x\bigwedge_{j\in S}\gamma_j(x,\ybar_j')$ are in $\E^*$.   Finally, by the inclusion-exclusion principle, it follows that $\bigvee_{j=1}^J\gamma_j(x,\ybar_j')$ is also in $\E^*$
and we finish.
\end{proof}

This restriction on quantifier complexity will be used in the analysis of the following natural decomposition of an MA-presented structure into independent components, analogous to the decomposition of a graph into connected components. In fact, the mate relation defined next is the same as the edge relation of the Gaifman graph, and $\sim$ the same as its connectedness relation. %The following definition is reminiscent of the Gaifman graph. 

\begin{definition}  Suppose $M$ is an MA-presented $\LL$-structure.  For $a,b\in M$, call $b$ a {\em mate of $a$} if there is some $\cbar$ from $M$ and the type of
(some permutation of) $ab\cbar$ contains an atomic 
$\LL$-formula.  Being a mate is reflexive and symmetric.  Let $\sim$ be the transitive closure of the mate relation.  As $\sim$ is an equivalence relation, write
$[a]_\sim$ for the $\sim$-class of $a\in M$, and call $\sim$-classes {\em MA-connected components}.
\end{definition} 

Observe that if $M$ is an MA-presented $\LL$-structure, then $M\models\neg R(\dbar)$ for all $\LL$-atomic $R(\xbar)$ and all $\dbar\in M^{\lg(\xbar)}$ whenever $\dbar$ is not
contained in a single $\sim$-class.  

\begin{definition}  Suppose $M$ is an MA-presented $\LL$-structure.  A {\em component map} is a bijection $f:M\rightarrow M$ such that $f([a]_\sim)=[f(a)]_\sim$ (setwise) for all $a\in M$.
An {\em $\LL$-component map} is a component map such that for each $a\in M$, $f\mr{[a]_\sim}:[a]_\sim\rightarrow [f(a)]_\sim$ is an $\LL$-isomorphism.
\end{definition}

The following Lemma, similar to \cite[{Proposition 4.4}]{MA} although working with a single structure, follows immediately from the observation above.

\begin{lemma}  \label{lem: component} If $M$ is an MA-presented $\LL$-structure, then $f \colon M \to M$ is an automorphism if and only if it is an $\LL$-component map.
\end{lemma}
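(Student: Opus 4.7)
The plan is to prove the two implications separately; both are direct consequences of the MA-presentedness assumption together with the definition of an $\LL$-component map.

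For the forward direction, suppose $f$ is an $\LL$-automorphism. Being a mate of $a$ is a union of definable conditions: $b$ is a mate of $a$ iff for some atomic $\LL$-formula $\phi$ and some $\cbar$, $\phi$ holds of some permutation of $ab\cbar$. Since $f$ preserves every atomic formula, $f$ preserves being a mate, and hence preserves its transitive closure $\sim$. Therefore $f([a]_\sim) = [f(a)]_\sim$, and the restriction $f \mr{[a]_\sim}$ carries atomic $\LL$-facts to atomic $\LL$-facts, so it is an $\LL$-isomorphism onto $[f(a)]_\sim$.

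For the backward direction, suppose $f$ is an $\LL$-component map. The key input is the observation recorded immediately before the lemma: $M \models \neg R(\dbar)$ for every atomic $\LL$-formula $R$ whenever $\dbar$ is not contained in a single $\sim$-class. I will show that $f$ preserves every atomic formula in both directions, from which it follows that $f$ is an automorphism (a bijection doing so is automatically an isomorphism of the full structure). Fix an atomic $R(\xbar)$ and a tuple $\dbar$. If $\dbar \subseteq [a]_\sim$ lies in a single $\sim$-class, then $f(\dbar) \subseteq f([a]_\sim) = [f(a)]_\sim$, and $R(\dbar) \iff R(f(\dbar))$ follows because $f \mr{[a]_\sim}$ is an $\LL$-isomorphism. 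If $\dbar$ spans several $\sim$-classes, then so does $f(\dbar)$, since $f$ is a bijection carrying each $\sim$-class onto a $\sim$-class; both $R(\dbar)$ and $R(f(\dbar))$ then fail by the key observation.

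No real obstacle arises; the argument is a direct application of the restriction on where atomic formulas can hold in an MA-presented structure. The one subtlety worth flagging is the handling of constant symbols in $\LL$: the formula $x = c$ is unary and hence mutually algebraic, so it is an atomic $\LL$-formula that $f$ must preserve under the analysis above, forcing $f(c^M) = c^M$ as any automorphism should.
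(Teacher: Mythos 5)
Your proof is correct and follows the same route as the paper: the paper states that the lemma ``follows immediately from the observation above'' (that atomic formulas fail on tuples not contained in a single $\sim$-class), and your two-direction argument is precisely the expansion of that one-line justification, with the case split on whether $\dbar$ lies in one $\sim$-class doing the work in the backward direction. Your remark about the atomic formulas $x=c$ is a reasonable way to handle the constant symbols that the paper leaves implicit.
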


The following definition gives a syntactic characterization of the $\sim$-relation.

\begin{definition}  A {\em linked $\LL$-atomic conjunction} is an $\LL$-formula of the form $\theta(\wbar):=\bigwedge_{i<n} \alpha_i(\wbar_i)$, where each $\alpha_i(\wbar_i)$ is 
$\LL$-atomic and $\wbar_i\cap\wbar_{i+1}\neq\emptyset$ for all $i<n-1$.  (If $n=1$, this last condition is vacuous, hence every $\LL$-atomic formula is a linked $\LL$-atomic conjunction.)
\end{definition}

Note that if every atomic $\LL$-formula is mutually algebraic (e.g., when $M$ is MA-presented), then as $\phi(\ubar)\wedge\psi(\vbar)$ is mutually algebraic whenever
$\phi(\ubar),\psi(\vbar)$ are mutually algebraic and $\ubar\cap\vbar\neq\emptyset$, it follows every linked $\LL$-atomic conjunction $\theta(\wbar)$ is mutually algebraic as well.

\begin{lemma}  \label{lem:linked}  Suppose $M$ is an infinite, MA-presented $\LL$-structure.  For $a,b\in M$, $a\sim b$ if and only if $\tp(a,b)$ contains $\exists\ubar \theta(x,y,\ubar)$
for some  linked, $\LL$-atomic conjunction
$\theta(x,y,\ubar)$.
\end{lemma}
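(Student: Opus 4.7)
The plan is to prove the biconditional directly from the definitions of $\sim$ and of linked $\LL$-atomic conjunction, without invoking Proposition~\ref{prop: qe}; the quantifier complexity analysis is not needed here.

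For the forward direction, I would unfold $\sim$ as the transitive closure of the mate relation and fix a chain $a = a_0, a_1, \ldots, a_n = b$ in which each consecutive pair is a mate pair. By the definition of mate, for each $i < n$ there is a tuple $\cbar_i$ from $M$ and an $\LL$-atomic formula realized by some permutation of $a_i a_{i+1} \cbar_i$; after relabeling its variable positions, write this formula as $\alpha_i(u_i, u_{i+1}, \zbar_i)$ so that $M \models \alpha_i(a_i, a_{i+1}, \cbar_i)$. Setting $u_0 := x$ and $u_n := y$, the formula
\[
\theta(x, y, u_1, \ldots, u_{n-1}, \zbar_0, \ldots, \zbar_{n-1}) := \bigwedge_{i=0}^{n-1} \alpha_i(u_i, u_{i+1}, \zbar_i)
\]
is then a linked $\LL$-atomic conjunction, since consecutive conjuncts share the variable $u_{i+1}$, and it is realized at $x = a$, $y = b$ by the tuple $(a_1, \ldots, a_{n-1}, \cbar_0, \ldots, \cbar_{n-1})$. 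The trivial case $a = b$ is handled by taking $\theta(x, y) := (x = y)$.

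For the converse, suppose $M \models \exists \ubar\, \theta(a, b, \ubar)$ for some linked atomic conjunction $\theta(\wbar) = \bigwedge_{i<n} \alpha_i(\wbar_i)$, and fix a witness $\dbar$ so that the free variables $\wbar$ are valued from $\{a, b\} \cup \dbar$. Within any single block $\wbar_i$, all assigned values are pairwise mates via the atomic formula $\alpha_i$, hence lie in a single $\sim$-class. Since consecutive blocks $\wbar_i$ and $\wbar_{i+1}$ share a variable, and hence a value, these classes coincide across consecutive indices, and a straightforward induction on $n$ shows that every value assigned to any $\wbar_i$ lies in one common $\sim$-class. Because $x$ and $y$ are among the free variables of $\theta$, each appears in some $\wbar_i$, so $a \sim b$.

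The argument is essentially syntactic bookkeeping. The only mildly delicate point is the relabeling step in the forward direction that converts each mate witness into an atomic formula of the desired shape $\alpha_i(u_i, u_{i+1}, \zbar_i)$; this requires tracking the unspecified permutation in the definition of mate, but presents no real conceptual difficulty.
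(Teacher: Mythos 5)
Your proof is correct and follows essentially the same route as the paper's: the forward direction chains the mate witnesses into a single conjunction linked through the shared variables $u_{i+1}$, and the converse reads off a chain of mates from the shared values of consecutive blocks. The only (harmless) difference is that you explicitly dispatch the case $a=b$ via the atomic formula $x=y$, which the paper leaves implicit.
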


\begin{proof}  First, assume $a\sim b$.  Choose $\set{c_0,\dots,c_n}\subseteq M$ such that $c_0=a$, $c_n=b$, and $c_{i+1}$ is a mate of $c_i$ for each $i<n$.
For each $i<n$, choose an $\LL$-atomic $\alpha_i(x_i,x_{i+1},\zbar_i)$ witnessing $c_{i+1}$ is a mate of $c_i$.  Without loss, assume $\zbar_i$ and $\zbar_j$ are disjoint for distinct $i,j$.  Then, writing $\wbar_i$ for $x_ix_{i+1}\zbar_i$,  $\theta(\wbar):=\bigwedge_{i<n}\alpha_i(\wbar_i)$ is a linked $\LL$-atomic conjunction and $M\models\exists\ubar \theta(a,b,\ubar)$, where $\ubar:=\wbar\setminus\set{x_0,x_n}$.

Conversely, assume $M\models\exists\ubar\theta(a,b,\ubar)$, where $\theta(\wbar):=\bigwedge_{i<n} \alpha_i(\wbar_i)$, with each $\alpha_i$ $\LL$-atomic and $\wbar_i\cap\wbar_{i+1}\neq\emptyset$.  For each $i<n$, choose a variable symbol $x_i\in\wbar_i\cap\wbar_{i+1}$.  Choose $\cbar\in M^{\lg(\ubar)}$ such that $M\models\theta(a,b,\cbar)$, let $\cbar_i$ denote the restriction of $\cbar$ corresponding to $\wbar_i$, and let $d_i$ be the singleton in $\cbar_i$ corresponding to $x_i$.   Without loss, assume $a\in\cbar_0$ and $b\in\cbar_{n-1}$.  Then $a, d_0$ and $b,d_{n-1}$ are mates, as are $d_i,d_{i+1}$ for each $i<n-1$.  Thus, $a\sim b$.
\end{proof}

For elements $a,b\in\acl(\emptyset)$, $a\sim b$ can be rather strange (e.g., if $c,d$ are constant symbols then the formula $\phi(x,y):=(x=c\wedge y=d)$ is mutually algebraic).
However, outside of $\acl(\emptyset)$, the $\sim$ relation characterizes dependence.
The following definition appears in \cite{LT1}.

\begin{definition}  Let $M$ be any $\LL$-structure.  A mutually algebraic $\LL$-formula $\theta(\zbar)$ {\em supports an infinite array} if there is an infinite set $\set{ \dbar_i:i\in\omega}$ 
such that $M\models\theta(\dbar_i)$ for each $i$ and $\dbar_i\cap\dbar_j=\emptyset$ for all $i\neq j$.
\end{definition}

\begin{lemma}  Suppose $M$ is any structure and $\theta(\zbar)$ is a mutually algebraic formula.  
Then $\theta(\zbar)$ supports an infinite array if and only if $\theta(\zbar)$ is not algebraic, i.e., $M\models\exists^\infty\zbar\theta(\zbar)$.
\end{lemma}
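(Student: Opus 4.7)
The forward direction is essentially immediate: if $\{\dbar_i : i \in \omega\}$ is an infinite family of pairwise disjoint realizations of $\theta$, then in particular the $\dbar_i$ are pairwise distinct, so $\theta(M)$ is infinite and hence $\theta$ is not algebraic.

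For the backward direction, I would construct the array greedily using the uniform bound coming from mutual algebraicity. Let $n:=\lg(\zbar)$, and let $K$ be a constant witnessing mutual algebraicity of $\theta(\zbar)$ in the sense of Definition~\ref{def:ONE}: for every $m\in M$, at most $K$ tuples $\dbar\in M^n$ with $m\in\dbar$ satisfy $\theta$. Assuming $M\models\exists^\infty\zbar\,\theta(\zbar)$, choose any $\dbar_0\in M^n$ with $M\models\theta(\dbar_0)$.

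Now suppose we have constructed pairwise disjoint $\dbar_0,\ldots,\dbar_j$ realizing $\theta$. The set $A_j:=\dbar_0\cup\cdots\cup\dbar_j$ has at most $(j+1)n$ elements, and by the mutual algebraicity bound, the number of tuples $\dbar\in M^n$ with $M\models\theta(\dbar)$ and $\dbar\cap A_j\neq\emptyset$ is at most $(j+1)nK$, which is finite. Since $\theta(M)$ is infinite, there exists $\dbar_{j+1}$ realizing $\theta$ with $\dbar_{j+1}\cap A_j=\emptyset$. Iterating produces the desired infinite array.

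There is no real obstacle here: the whole content is that the finite-degree condition defining mutual algebraicity makes the set of ``blockers'' at each stage finite, which is exactly what a greedy disjointness argument needs. I expect the author's proof to be a line or two along the same lines.
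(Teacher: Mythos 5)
Your proof is correct, and the forward direction matches the paper's (which just says it is obvious). For the right-to-left direction the paper takes a genuinely different, though equally short, route: rather than building the array greedily, it starts from an arbitrary infinite set of distinct realizations of $\theta(\zbar)$ and applies the finite $\Delta$-system (sunflower) lemma to extract an infinite subfamily whose pairwise intersections all equal a fixed finite root $R$; mutual algebraicity then forces $R=\emptyset$, since any $m\in R$ would lie in infinitely many realizations of $\theta$. Both arguments ultimately rest on the same observation -- only finitely many realizations of a mutually algebraic formula can meet a given finite set -- so the difference is one of packaging. Your greedy construction is slightly more self-contained, needing no combinatorial lemma, and makes the role of the degree bound $K$ explicit. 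The paper's version has the mild advantage of producing the disjoint array as a subfamily of any prescribed infinite family of realizations, rather than as a freshly chosen sequence. Either proof is complete and acceptable.
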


\begin{proof}  Left to right is obvious, so assume $\theta(\zbar)$ is not algebraic.  Choose any infinite set $\{\dbar_i:i\in\omega\}$ of distinct realizations of $\theta(\zbar)$.
By the finite $\Delta$-system lemma, there is a finite set $R$ and an infinite $I\subseteq\omega$ such that $\dbar_i\cap\dbar_j=R$ for all distinct $i,j\in I$.
As $I$ is infinite and $\theta(\zbar)$ is mutually algebraic, we must have $R=\emptyset$, hence $\theta(\zbar)$ supports an infinite array.
\end{proof}

\begin{lemma}  \label{lem:support}  Suppose $M$ is MA-presented, $\cbar\in M^k$, and $\cbar\cap\acl(\emptyset)=\emptyset$.
\begin{enumerate}
\item  If  $\tp(\cbar)$ contains a quantifier-free, mutually algebraic
$\LL$-formula $\theta(\zbar)$ with $\lg(\zbar)=\lg(\cbar)$, then
 $\tp(\cbar)$ contains a linked $\LL$-atomic conjunction $\rho(\zbar)$.
 \item  If $\tp(\cbar)$ contains a mutually algebraic $\LL$-formula $\delta(\xbar)$ with $\lg(\xbar)=\lg(\cbar)$, then there are $\zbar\supseteq\xbar$, $\dbar\supseteq\cbar$ with
 $\lg(\dbar)=\lg(\zbar)$, and a linked $\LL$-atomic conjunction $\rho(\zbar)\in\tp(\dbar)$.
 \item If $\cbar$ satisfies a quantifier-free mutually algebraic $\LL_M$-formula $\theta(\xbar,\mbar)$ with $\cbar\cap\acl(\mbar)=\emptyset$. Then there are
 $\zbar\supseteq\xbar$, $\dbar\supseteq\cbar$ with
 $\lg(\dbar)=\lg(\zbar)$, and a linked $\LL$-atomic conjunction $\delta(\zbar)\in\tp(\dbar)$.
 \end{enumerate}
\end{lemma}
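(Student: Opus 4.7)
The plan is to prove part (1) by a direct argument on the positive atoms of $\theta$, and then to reduce parts (2) and (3) to (1).

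For part (1), put $\theta(\zbar)$ into disjunctive normal form and let $\theta'(\zbar)=\rho'(\zbar)\wedge N(\zbar)$ be the disjunct of $\theta$ satisfied by $\cbar$, with $\rho'$ the conjunction of positive atoms and $N$ the conjunction of negated atoms; since its solution set is contained in that of $\theta$, the disjunct $\theta'$ is also mutually algebraic. I claim $\rho'$ is the desired linked $\LL$-atomic conjunction. It suffices to show (i) every variable of $\zbar$ appears in some atom of $\rho'$ and (ii) the variable graph of $\rho'$ is connected, since an atomic conjunction on $\zbar$ whose variables cover $\zbar$ and whose variable graph is connected can be reordered (with possible repetitions) into a linked chain by walking a spanning tree. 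For both (i) and (ii), suppose $V\subsetneq\zbar$ is either an isolated vertex (failing (i)) or a connected component of the positive-atom graph (failing (ii)); I fix $\zbar\setminus V$ to $\cbar\myrestriction_{\zbar\setminus V}$ and study the $\bbar_V$ satisfying $\theta'(\bbar_V,\cbar\myrestriction_{\zbar\setminus V})$. Mutual algebraicity of $\theta'$ (using any fixed coordinate in $\zbar\setminus V$) bounds this set by some $K$. Because $V$ is disconnected in the positive-atom graph, no positive atom of $\rho'$ crosses between $V$ and $\zbar\setminus V$, and each negated atom crossing $V$ excludes only finitely many $\bbar_V$ (its underlying $\LL$-atom is mutually algebraic and some coordinate is fixed). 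Splitting $N$ into purely $V$-internal, crossing, and purely $(\zbar\setminus V)$-internal negated atoms, we find $\rho'_V\wedge N_V\subseteq\phi\cup(\text{finite set})$ where $\phi=\rho'_V\wedge N_V\wedge N_{\mathrm{cross}}$ has at most $K$ solutions; hence the parameter-free $\LL$-formula $\rho'_V\wedge N_V$ has finite solution set in $M$, so $\cbar\myrestriction_V\subseteq\acl(\emptyset)$, contradicting $\cbar\cap\acl(\emptyset)=\emptyset$ whenever $\cbar\myrestriction_V$ is nonempty.

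For part (2), Proposition~\ref{prop: qe} expresses $\delta(\xbar)$ as a Boolean combination of formulas $\exists\ybar\,\theta(\xbar,\ybar)$ with $\theta$ quantifier-free mutually algebraic. Take a DNF-disjunct of this expression satisfied by $\cbar$; it contains at least one positive conjunct $\exists\ybar\,\theta(\xbar,\ybar)$, and we pick a witness $\ebar$ with $M\satisfies\theta(\cbar,\ebar)$, setting $\dbar:=\cbar\ebar$ and $\zbar:=\xbar\ybar$. Run the part (1) argument on $\theta(\zbar)$ and $\dbar$. The main subtlety is that $\dbar$ may meet $\acl(\emptyset)$, so the contradiction used in (1) is not immediately available; the resolution is that if the positive-atom graph of the DNF-disjunct has two or more components $V_1,\dots,V_s$, the fixing-and-varying argument (using coordinates from other components to invoke mutual algebraicity of the disjunct) forces each $\rho'_{V_t}\wedge N_{V_t}$ to be finite, so $\dbar\myrestriction_{V_t}\subseteq\acl(\emptyset)$ for every $t$; aggregating gives $\dbar\subseteq\acl(\emptyset)$, in particular $\cbar\subseteq\acl(\emptyset)$, contradicting the hypothesis. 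Hence $s=1$, and the positive atoms yield a linked $\LL$-atomic conjunction in $\tp(\dbar)$.

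For part (3), I reduce to (1) by naming the parameters. Let $\LL':=\LL\cup\{c_i:i<\lg(\mbar)\}$ and let $M':=(M,\mbar)$ interpret each $c_i$ as $m_i$. Every $\LL'$-atom is either an $\LL$-atom with some positions specialized to named constants or an equality $x=c_i$; both are mutually algebraic in $M'$, so $M'$ is $\LL'$-MA-presented. Since $\acl_{\LL'}(\emptyset)=\acl_\LL(\mbar)$, the hypothesis becomes $\cbar\cap\acl_{\LL'}(\emptyset)=\emptyset$, and $\theta(\xbar,\mbar)$ viewed as an $\LL'$-formula in $\xbar$ remains quantifier-free mutually algebraic with the same solution set. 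Applying part (1) in $M'$ yields a linked $\LL'$-atomic conjunction $\rho'(\xbar)\in\tp_{\LL'}(\cbar)$. Replacing each constant $c_i$ appearing in $\rho'$ by a fresh variable $w_i$ produces an $\LL$-formula $\rho(\xbar,\wbar)$; this is still a linked atomic conjunction, since every pair of successive atoms in $\rho'$ shared an $\xbar$-variable (constants are not variables in $\LL'$), which remains shared after uninstantiation. Taking $\zbar:=\xbar\wbar$ and $\dbar:=\cbar\mbar'$ with $\mbar'$ listing the named elements actually appearing in $\rho'$ completes the proof.
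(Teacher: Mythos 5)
Your part (3) matches the paper's argument, and your part (1) replaces the paper's black-box citation of Lemma~A.1 of \cite{LT1} (reached there via saturation and the ``supports an infinite array'' lemma) with a direct fixing-and-counting argument; that route is workable. But there are two problems, one of which is serious.

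The serious one is in part (2). You extract a \emph{single} positive conjunct $\exists\ybar\,\theta(\xbar,\ybar)$ from the DNF disjunct and write it as if its free variables were all of $\xbar$, but Proposition~\ref{prop: qe} only delivers a Boolean combination of formulas $\phi_i(\xbar_i)\in\E$ where each $\xbar_i$ is a \emph{subtuple} of $\xbar$; in general no single positive conjunct mentions every variable of $\xbar$ (consider a disjunct of the shape $\exists\ybar_1\theta_1(x_1,x_2,\ybar_1)\wedge\exists\ybar_2\theta_2(x_2,x_3,\ybar_2)\wedge\cdots$). Then your $\zbar:=\xbar\ybar$ does not contain $\xbar$, the linked conjunction you produce does not cover $\cbar$, and the conclusion of (2) is not reached. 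What is needed first is that some \emph{linked subfamily of the positive $\E$-conjuncts covers $\xbar$}; this is exactly where the paper applies the infinite-array lemma at the level of $\E$-formulas, and only then conjoins the matrices $\eta_i(\xbar_i,\wbar_i)$ of that subfamily (linked through the shared $\xbar$-variables, hence jointly quantifier-free mutually algebraic) before choosing witnesses and invoking (1). Your ``resolution'' paragraph runs the component analysis on the DNF of the single matrix $\theta$, which is the wrong level: it cannot recover variables of $\xbar$ that the chosen conjunct never mentioned. (The unproved claim that a positive conjunct exists at all needs the same kind of argument.) The repair is available to you --- rerun your part (1) component argument on the outer disjunct with the mutually algebraic $\E$-formulas playing the role of atoms, exactly as the paper does via \cite{LT1} --- but as written the reduction fails.

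The second issue is local to part (1). The step ``each negated atom crossing $V$ excludes only finitely many $\bbar_V$'' is false as stated: a crossing atom $\beta_j$ pins the coordinates of $\bbar_V$ that it actually mentions to finitely many values (mutual algebraicity of the atom with one coordinate fixed in $\cbar$ outside $V$), but it says nothing about the remaining coordinates of $V$, so the excluded set is typically infinite. What you actually need, and can get, is that the set of solutions of $\rho'_V$ lying over one of those finitely many partial values is finite --- and that uses once more that $V$ is a connected component of the positive-atom graph, so that $\rho'_V$ is a linked, hence mutually algebraic, conjunction covering $V$. With that extra step the containment of the solution set of $\rho'_V\wedge N_V$ in that of $\phi$ together with a finite set is restored, and the rest of part (1) goes through.
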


\begin{proof}  By passing to a large enough elementary extension, we may assume $M$ is $|\LL|^+$-saturated.  
For (1), by writing $\theta(\zbar)$ in Disjunctive Normal Form and choosing a disjunct in $\tp(\cbar)$, we may assume $\theta(\zbar)$ has the form
$$\bigwedge_{i\in S} \alpha_i(\zbar_i)\wedge\bigwedge_{j\in U} \neg\beta_j(\zbar_j)$$
where each $\alpha_i,\beta_j$ are $\LL$-atomic.  Now, since $M$ is MA-presented, each $\alpha_i(\zbar_i),\beta_j(\zbar_j)$ is mutually algebraic, and since
$\cbar\cap\acl(\emptyset)=\emptyset$, the saturation of $M$ implies that $\theta(\zbar)$ supports an infinite array.  Thus, by Lemma~A.1 of \cite{LT1}, 
there is an $S_0\subseteq S$ such that $\rho(\zbar):=\bigwedge_{i\in S_0}\alpha_i(\zbar_i)$ is a linked, $\LL$-atomic conjunction with $\bigcup\{\zbar_i:i\in S_0\}=\zbar$.
As $\theta(\zbar)\vdash\rho(\zbar)$, $\rho(\zbar)\in\tp(\cbar)$.

For (2), by Proposition~\ref{prop: qe}, we may assume $\delta(\xbar)$ has the form there.  By choosing a disjunct satisfied by $\cbar$, we may assume
$\delta(\xbar)$ has the form
$$\bigwedge_{i\in S}\phi_{i}(\xbar_i)\wedge\bigwedge_{j\in U} \neg\psi_j(\xbar_j)$$
where each $\phi_i(\xbar_i),\psi_j(\xbar_j)$ is from $\E$, and so is mutually algebraic. As
$\delta(\xbar)$ supports an infinite array, again we have a subset $S_0\subseteq S$ with $\gamma(\xbar):=\bigwedge_{i\in S_0} \phi_i(\xbar_i)$, and $\{\xbar_i:i\in S_0\}$ linked with
$\bigcup\{\xbar_i:i\in S_0\}=\xbar$.    Now, each $\phi_i(\xbar_i)$ has the form $\exists\wbar_i\eta_i(\xbar_i,\wbar_i)$ where $\eta_i(\xbar_i,\wbar_i)$ is quantifier-free and mutually algebraic.
Thus, $\gamma(\xbar)$ is equivalent to a formula $\exists \wbar^*\theta(\xbar,\wbar^*)$ with $\theta(\xbar,\wbar^*)$  quantifier free and mutually algebraic since $\{\eta_i(\xbar_i,\wbar_i):i\in S_0\}$ are linked.  Additionally, $M\models\gamma(\cbar)$.  Write $\theta$ as $\theta(\zbar)$ with $\zbar\supseteq\xbar$.  Choose $\dbar\supseteq \cbar$ such that $M\models\theta(\dbar)$.
As $\theta(\zbar)$ is mutually algebraic, $\dbar\cap\acl(\emptyset)=\emptyset$.  Thus, we can apply (1) to $\theta(\zbar)$ and $\dbar$, getting $\rho(\zbar)$ as required.

(3) 
Let $M_{\mbar}$ be the expansion of $M$ naming the constants of $\mbar$. Applying (1) to $\theta(x, \mbar)$ in $M_{\mbar}$,
$tp(\cbar)$ contains a linked, 
$\LL_{\mbar}$-atomic conjunction
 $\rho(\xbar,\mbar):=\bigwedge_{i\in S_0}\alpha_i(\xbar_i,\mbar_i)$ with $\bigcup\{\xbar_i:i\in S_0\}=\xbar$.
As $M$ is MA-presented, every atomic formula $\alpha_i(\xbar,\wbar_i)$ is mutually algebraic, hence by choosing disjoint variables for each $\wbar_i$ and letting
$\wbar^\#=\<\wbar_i:i\in S_0\>$,
we have $M\models\exists \wbar^\#\delta(\cbar,\wbar^\#)$
where $\delta(\cbar,\wbar^\#):=\bigwedge_{i\in S_0} \alpha_i(\xbar_i,\wbar_i)$.  Choose any $\dbar\supseteq\cbar$ with $M\models\delta(\dbar)$ and we finish, taking
$\zbar:=\xbar$\^{}$\wbar^\#$.
\end{proof}

\begin{proposition}  \label{prop: acl}  Suppose $M$ is an $MA$-presented $\LL$-structure and let $a\in M\setminus\acl(\emptyset)$.  Then 
$[a]_\sim=\acl(a)\setminus\acl(\emptyset)$.
\end{proposition}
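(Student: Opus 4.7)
I plan to prove the two inclusions of $[a]_\sim = \acl(a) \setminus \acl(\emptyset)$ separately. The easy inclusion $[a]_\sim \subseteq \acl(a) \setminus \acl(\emptyset)$ follows from Lemma~\ref{lem:linked} alone: if $b \sim a$, that lemma supplies a linked $\LL$-atomic conjunction $\theta(x,y,\ubar)$ with $\exists\ubar\,\theta(x,y,\ubar) \in \tp(a,b)$, and the observation preceding Lemma~\ref{lem:linked} makes $\theta$ mutually algebraic. Mutual algebraicity bounds the completions once we fix $x=a$, so $b \in \acl(a)$; the symmetric bound with $y=b$ fixed gives $a \in \acl(b)$, so if $b \in \acl(\emptyset)$ then $a \in \acl(\acl(\emptyset)) = \acl(\emptyset)$, contradicting the hypothesis on $a$. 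Hence $b \in \acl(a) \setminus \acl(\emptyset)$.

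For the reverse inclusion, fix $b \in \acl(a) \setminus \acl(\emptyset)$, so that $(a,b) \cap \acl(\emptyset) = \emptyset$. I plan to exhibit some mutually algebraic $\LL$-formula $\delta(x,y) \in \tp(a,b)$ whose free variables include both $x$ and $y$; once this is done, Lemma~\ref{lem:support}(2) supplies $\zbar \supseteq (x,y)$, $\dbar \supseteq (a,b)$, and a linked $\LL$-atomic conjunction $\rho(\zbar) \in \tp(\dbar)$. Existentially quantifying the new coordinates of $\zbar$ then puts $\exists\ubar\,\rho(x,y,\ubar)$ in $\tp(a,b)$, and Lemma~\ref{lem:linked} yields $a \sim b$, i.e.\ $b \in [a]_\sim$.

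The main obstacle is producing $\delta(x,y)$. Using $b \in \acl(a)$, choose an $\LL$-formula $\phi(x,y) \in \tp(a,b)$ for which $\phi(a,y)$ is algebraic, and then use Proposition~\ref{prop: qe} to replace $\phi$ by a boolean combination of formulas from $\E$ in variables from $\{x,y\}$. Pass to disjunctive normal form and select the disjunct $D(x,y) := \bigwedge_i \alpha_i(\ubar_i) \wedge \bigwedge_j \neg\beta_j(\vbar_j)$ satisfied by $(a,b)$. If some $\alpha_i$ already involves both $x$ and $y$, let $\delta := \alpha_i$; two applications of the dichotomy recorded just before Proposition~\ref{prop: qe} (with $x$ and with $y$ successively distinguished) show such an $\alpha_i$ is bounded-to-one in each coordinate, hence mutually algebraic. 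Otherwise, after setting $x=a$ the $x$-only and parameter-free conjuncts collapse to truth values, so $D(a,y)$ becomes $P(y) \wedge N^{xy}(a,y)$, where $P(y)$ packages the $y$-only conjuncts and $N^{xy}(a,y)$ packages the instances $\neg\beta_j(a,y)$ for $\beta_j \in \E$ using both variables. Each such $\neg\beta_j(a,y)$ is cofinite by the same dichotomy, so $N^{xy}(a,y)$ is cofinite, and since $D(a,y)$ is algebraic, $P(y)$ must be algebraic as well; but $P$ is parameter-free and satisfied by $b$, forcing $b \in \acl(\emptyset)$ and contradicting the choice of $b$. This DNF-and-dichotomy bookkeeping is the only genuinely delicate step; once $\delta$ is in hand, everything else is a direct assembly of the earlier results.
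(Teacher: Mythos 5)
Your proof is correct, and the overall architecture matches the paper's: the forward inclusion is the paper's argument verbatim (Lemma~\ref{lem:linked} plus mutual algebraicity of linked conjunctions), and the reverse inclusion ends, as in the paper, by feeding a mutually algebraic binary formula $\delta(x,y)\in\tp(a,b)$ into Lemma~\ref{lem:support}(2) and Lemma~\ref{lem:linked}. Where you genuinely diverge is in how $\delta$ is produced. The paper uses that $Th(M)$ is weakly minimal (Theorem~\ref{thm:ma char}(4)), so $\acl$ satisfies exchange; from $b\in\acl(a)\setminus\acl(\emptyset)$ it gets $a\in\acl(b)$ as well, picks formulas $\phi,\psi$ witnessing algebraicity in each direction, and sets $\delta(x,y):=\phi\wedge\psi\wedge\exists^{\le K}u\,\phi(u,y)\wedge\exists^{\le K}v\,\psi(x,v)$, which is mutually algebraic directly from Definition~\ref{def:ONE}. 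You instead run Proposition~\ref{prop: qe}, pass to disjunctive normal form, and use the dichotomy to show that if no positive conjunct of the chosen disjunct involves both variables, then the $y$-only part $P(y)$ is a parameter-free algebraic formula satisfied by $b$, contradicting $b\notin\acl(\emptyset)$; this forces a two-variable $\E$-conjunct, which is automatically mutually algebraic (every member of $\E$ is, as the paper notes, so your two-sided dichotomy check there is redundant but harmless). Your route buys something: it is purely syntactic and never needs exchange or $a\in\acl(b)$ at all, only $b\in\acl(a)$ and $b\notin\acl(\emptyset)$. The cost is that you re-do DNF bookkeeping of the kind Lemma~\ref{lem:support} already performs internally, and you should note that Proposition~\ref{prop: qe} requires $M$ infinite (harmless here, since for finite $M$ the hypothesis $a\notin\acl(\emptyset)$ is vacuous). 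Both arguments are sound.
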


\begin{proof}
Assume $M$ is MA-presented and fix $a\in M\setminus\acl(\emptyset)$.
First, assume $b\sim a$.  By Lemma~\ref{lem:linked} choose a linked $\LL$-atomic conjunction $\theta(x,y,\ubar)$ and $\cbar$ such that
$M\models\theta(a,b,\cbar)$.  As $\theta$ is mutually algebraic, it follows that $b\in\acl(a)$ and $a\in\acl(b)$.  Thus, if $b\in\acl(\emptyset)$, we would
have $a\in\acl(\emptyset)$ as well, which it is not.  So $b\in \acl(a)\setminus\acl(\emptyset)$.

Conversely, assume $b\in \acl(a)\setminus\acl(\emptyset)$.  As $M$ is mutually algebraic, $Th(M)$ is weakly minimal by Theorem~\ref{thm:ma char},
and so $\acl$ satisfies exchange.  That is, since $b\in \acl(a)\setminus\acl(\emptyset)$, we also have $a\in\acl(b)\setminus\acl(\emptyset)$.
Thus, there is an integer $K$ and a pair of $\LL$-formulas $\phi(x,y),\psi(x,y)\in\tp(a,b)$ witnessing the algebraicity, i.e.,
$M\models\exists^{\le K}x\phi(x,b)$ and $M\models\exists^{\le K}y\psi(a,y)$
Let $$\delta(x,y):=\phi(x,y)\wedge\psi(x,y)\wedge \exists^{\le K}u\phi(u,y)\wedge\exists^{\le K}v\psi(x,v)$$
Then $\delta(x,y)\in\tp(a,b)$ and is mutually algebraic directly from Definition~\ref{def:ONE}.
Since  $\{a,b\}\cap\acl(\emptyset)=\emptyset$,  $b\sim a$  follows from Lemma~\ref{lem:support}(2) and Lemma~\ref{lem:linked}.
\end{proof}

Much of the development in this subsection generalizes Section 4 of \cite{MA}, which studies elementary extensions of a mutually algebraic model $M$.
There, for $N\succ M$ and $a\in (N-M)$, the component $[a]$ of $a$ was defined as $\acl(a)\bs M$, and every such $N$ is a union of components over $M$. This agrees with our notation, as the natural expansion of $N$ to an $\LL_M$ structure is MA-presented with $\acl(\emptyset)=M$, but importantly, here we name fewer constants.

\subsection{Associating a mutually algebraic structure with an MA-presented structure}

In this section, we define what we mean by an {\em association} of structures in different languages and prove that every mutually algebraic structure is
associated to an MA-presented structure.  This procedure is far from unique, but we explore how different the $\sim$-relations can be.

\begin{definition}   Suppose $\LL\subseteq \LL^*$ are languages and $M$ is an $\LL$-structure.  An $\LL^*$-structure $M^*$ is a {q.f.\ expansion by definitions of $M$}
if $M^*$ is an expansion of $M$ (i.e., $M^*$ and $M$ have the same universes and the interpretations of every $\LL_M$-formula are the same in both structures) such that
every atomic $\LL^*$-formula is equivalent to a quantifier-free $\LL_M$-formula.
\end{definition}

The following facts are easy.

\begin{fact} \label{fact: expansion} Suppose $\LL\subseteq \LL^*$ and $M^*$ is a q.f.\ expansion of $M$ by definitions.  Then:
\begin{enumerate}
\item  Every q.f.\ $\LL^*_M$-formula is equivalent to a q.f. $\LL_M$-formula.
\item  Every $\LL^*_M$-formula $\theta^*(\xbar)$ is equivalent to an $\LL_M$-formula $\gamma(\xbar)$.
\item  For every $N\succeq M$ there is a unique $N^*\succeq M^*$ such that $N^*$ is a q.f.\  expansion of $N$ by definitions; and
\item  For every $N^*\succeq M^*$, the $\LL$-reduct $N\succeq M$ and $N^*$ is a q.f.\ expansion by definitions of $N$.
\item  If $\LL^*$ (and hence $\LL$) are finite, relational, then there is some finite $\mbar$ from $M$ so that every $\LL^*$-formula $\phi(\xbar)$ is equivalent  in $M^*$ to
some $\LL_{\mbar}$-formula $\psi(\xbar,\mbar)$.
\end{enumerate}
\end{fact}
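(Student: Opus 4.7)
The plan is to establish (1) and (2) by induction on formula complexity, and then derive (3)--(5) as consequences. For (1), I start from the hypothesis that every atomic $\LL^*$-formula is equivalent in $M^*$ to a quantifier-free $\LL_M$-formula; after naming parameters as constants, the same holds for atomic $\LL^*_M$-formulas, and boolean combinations preserve this property, so every quantifier-free $\LL^*_M$-formula is equivalent to a quantifier-free $\LL_M$-formula. For (2), I induct further on quantifier depth: the base case is (1), boolean steps are immediate, and $\exists x\,\theta^*(x,\ybar)$ is handled by applying the inductive hypothesis to $\theta^*$ and then quantifying, producing an $\LL_M$-formula with the same interpretation in $M^*$.

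For (3), given $N\succeq M$, I define $N^*$ by interpreting each $R\in\LL^*\setminus\LL$ using the defining quantifier-free $\LL_M$-formula $\phi_R$ provided by the hypothesis on $M^*$ (constants from $\LL^*\setminus\LL$ are handled analogously, using a defining formula of the form $x=c'$ with $c'\in M$). To verify $N^*\succeq M^*$, given any $\LL^*_M$-formula $\theta^*(\xbar)$ and parameters from $M$, I use (2) to translate $\theta^*$ to an $\LL_M$-formula $\gamma(\xbar)$ that is $M^*$-equivalent, observe that the same translation is valid in $N^*$ by construction, and then apply elementarity of $N\succeq M$. Uniqueness follows because any elementary extension $N^{**}\succeq M^*$ must satisfy the sentences $\forall\xbar\,(R(\xbar)\leftrightarrow\phi_R(\xbar))$ that hold in $M^*$, which forces the interpretations of symbols in $\LL^*\setminus\LL$. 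Part (4) is the dual construction: given $N^*\succeq M^*$, the $\LL$-reduct $N$ satisfies $N\succeq M$ because every $\LL$-formula is already an $\LL^*$-formula, and the equivalences $R(\xbar)\leftrightarrow\phi_R(\xbar)$ are $\LL^*_M$-sentences holding in $M^*$, so by elementarity they hold in $N^*$, realizing $N^*$ as a quantifier-free expansion by definitions over $N$.

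For (5), the finite relational hypothesis bounds the number of atomic $\LL^*$-formulas up to variable renaming. Each such atomic formula is equivalent in $M^*$ to a quantifier-free $\LL_M$-formula using only finitely many parameters from $M$, and the union $\mbar$ of all these finitely many parameter sets is still a finite tuple from $M$. Rerunning the induction of (2) with $\LL_{\mbar}$ in place of $\LL_M$ (the atomic step uses exactly the parameters in $\mbar$, while boolean and quantifier steps introduce none) then shows that every $\LL^*$-formula is equivalent in $M^*$ to an $\LL_{\mbar}$-formula. The only delicate point throughout is ensuring the constructed $N^*$ in (3) is genuinely an elementary extension and not merely an expansion agreeing on $M$, and this is precisely what (2) enables by reducing $\LL^*$-truth in $N^*$ to $\LL$-truth in $N$, where elementarity is given.
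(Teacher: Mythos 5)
Your proposal is correct and follows essentially the same route as the paper: induction on formula complexity for (1) and (2) (using that $M$ and $M^*$ share a universe to handle quantifiers), transporting the quantifier-free definitions of atomic $\LL^*$-symbols to define $N^*$ in (3) with uniqueness forced by the defining equivalences, the dual argument for (4), and collecting the finitely many parameters of the finitely many atomic definitions for (5). The paper's own proof is just a terser version of the same argument, so no further comparison is needed.
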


\begin{proof} (1)  By taking boolean combinations, one immediately gets that every q.f.\ $\LL^*$-formula is equivalent to a q.f.\ $\LL_M$-formula.
Then (1) follows from this by taking specializations.

(2)
As $M$ and $M^*$ have the same universes, one proves by induction on the complexity of $\LL^*$-formulas that every $\LL^*$-formula is equivalent to
an $\LL_M$-formula.  By taking specializations, (2) follows.

(3)  Given any atomic $\LL^*$-formula $\alpha(\xbar)$, let $\theta(\xbar)$ be a q.f.\ $\LL$-formula equivalent to it.  Now interpret $\alpha(N^*)$ as $\theta(N)$.
This works, and is the unique way of obtaining such an $N^*$.

(4)  Easy.

(5)  As $\LL^*$ is finite relational, let $\{\alpha_i(\xbar_i):i<K\}$ enumerate the (finitely many) $\LL^*$-atomic formulas.  By (1), for each  $i$, choose an $\LL$-formula 
$\psi_i(\xbar_i,\ybar_i)$ and $\mbar\in M^{\lg(\ybar_i)}$ such that
$\psi_i(\xbar_i,\mbar_i)$ equivalent on $M^*$ to $\phi_i(\xbar_i)$. Let $\mbar$ be any finite tuple containing every $\mbar_i$.  It follows by induction on the complexity of $\LL^*$-formulas
that every $\LL^*$-formula is equivalent on $M^*$ to an $\LL_{\mbar}$-formula.
\end{proof}

\begin{definition} \label{def:associated} 
Suppose $\LL_1$ and $\LL_2$ are any two languages.   An $\LL_1$-structure $M_1$ and an $\LL_2$-structure $M_2$ with the same universes are {\em associated}
if there is an $\LL_1\cup \LL_2$-structure $M^*$ (also with the same universe) that is a q.f.\ expansion by definitions of both $M_1$ and $M_2$.
%Suppose $\LL_1$ and $\LL_2$ are disjoint languages.  An $\LL_1$-structure $M_1$ and an $\LL_2$-structure $M_2$ are {\em associated}
%if there is an $\LL_1\cup \LL_2$-structure $M^*$ that is a q.f.\ expansion by definitions of both $M_1$ and $M_2$.
\end{definition}

The most obvious example of associated structures is that any expansion $M'$ of a structure $M$ by naming constants
is associated to $M$.  The following results are immediate from Fact~\ref{fact: expansion}.

\begin{fact}  \label{fact:  fact2}  Suppose $M_1$, $M_2$ are associated structures in languages $\LL_1, \LL_2$, respectively.  Then, letting $M$ denote their common universe:
\begin{enumerate}
%\item  $M_1$ and $M_2$ have the same universe, which we denote by $M$;
\item  Every (q.f.) $\LL_1(M)$-formula is equivalent to a (q.f.) $\LL_2(M)$-formula; and
\item  For $\ell=1,2$, if $N_\ell\succeq M_\ell$ then there is a unique $N_{3-\ell}\succeq M_{3-\ell}$ with universe $N_\ell$ such that $N_1$ is associated to $N_2$.
\end{enumerate}
\end{fact}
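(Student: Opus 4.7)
The plan is to derive both parts from Fact~\ref{fact: expansion} applied to the common witness $M^*$ in the language $\LL_1 \cup \LL_2$, using two viewpoints: that of $M_1 \subseteq M^*$ (with the sublanguage pair $\LL_1 \subseteq \LL_1 \cup \LL_2$) and that of $M_2 \subseteq M^*$ (with $\LL_2 \subseteq \LL_1 \cup \LL_2$). Every clause will come from invoking one of these two viewpoints.

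For (1), any (q.f.) $\LL_1(M)$-formula $\phi$ is in particular a (q.f.) $(\LL_1 \cup \LL_2)(M)$-formula. By Fact~\ref{fact: expansion}(1) in the q.f.\ case, or (2) in general, applied to the pair $M_2 \subseteq M^*$, $\phi$ is equivalent in $M^*$ to a (q.f.) $\LL_2(M)$-formula; since $M^*$ and $M_2$ interpret all $\LL_2(M)$-formulas identically, this equivalence persists in $M_2$. The reverse direction is symmetric.

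For (2), given $N_1 \succeq M_1$, I apply Fact~\ref{fact: expansion}(3) to the pair $M_1 \subseteq M^*$ to obtain the unique $N^* \succeq M^*$ that is a q.f.\ expansion by definitions of $N_1$, and set $N_2 := N^* \upharpoonright \LL_2$. Fact~\ref{fact: expansion}(4) applied to the pair $M_2 \subseteq M^*$ then gives both $N_2 \succeq M_2$ and that $N^*$ is a q.f.\ expansion by definitions of $N_2$, so $N^*$ witnesses that $N_1$ is associated to $N_2$. For uniqueness, any competing $N_2' \succeq M_2$ comes with a witnessing common expansion $N^{*'}$ that is a q.f.\ expansion by definitions of $N_1$; once one verifies $N^{*'} \succeq M^*$, the uniqueness clause of Fact~\ref{fact: expansion}(3) forces $N^{*'} = N^*$, and hence $N_2' = N_2$.

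The main obstacle is precisely this verification that $N^{*'} \succeq M^*$: a priori only $N^{*'} \supseteq M^*$ as substructures is immediate from the two separate elementary extensions $N_1 \succeq M_1$ (in $\LL_1$) and $N_2' \succeq M_2$ (in $\LL_2$). Promoting this to elementary extension in the combined language $\LL_1 \cup \LL_2$ requires exploiting that each $\LL_2$-atomic relation is q.f.-definable over $M$ in $M^*$ by an $\LL_1(M)$-formula; these rigid definitions must lift to $N^{*'}$ since its $\LL_1$-reduct is $N_1 \succeq M_1$, forcing the $\LL_2$-interpretations in $N_2'$ to match those in $N_2$ on all of $|N_1|$. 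Everything else in the argument is a purely formal application of the previous Fact.
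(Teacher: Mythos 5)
Your treatment of part (1) and of the existence half of part (2) is correct and is exactly the intended derivation: the paper offers no argument beyond ``immediate from Fact~\ref{fact: expansion},'' and reading the common witness $M^*$ against the two sublanguage pairs $\LL_1\subseteq\LL_1\cup\LL_2$ and $\LL_2\subseteq\LL_1\cup\LL_2$ is the right way to make that precise.

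The uniqueness half of (2) is where you yourself locate the difficulty, and the step you wave through --- ``these rigid definitions must lift to $N^{*\prime}$ since its $\LL_1$-reduct is $N_1\succeq M_1$'' --- does not go through. The sentence $\forall\xbar\,(R(\xbar)\leftrightarrow\phi_R(\xbar))$ expressing the $M^*$-definition of an $\LL_2$-atomic $R$ by a q.f.\ $\LL_1(M)$-formula mentions $R$, so elementarity of the $\LL_1$-reduct alone cannot transfer it to $N^{*\prime}$; a competing association is free to define $R$ by a \emph{different} q.f.\ $\LL_1$-formula using new parameters from $N_1\setminus M$. Indeed, under the literal reading of ``associated'' the uniqueness claim fails. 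Take $\LL_1=\{P\}$ and $\LL_2=\{Q\}$ unary, $M=\omega$, $P^{M_1}=Q^{M_2}=$ the evens, associated via $M^*$ interpreting $P=Q$. Let $N_1$ add two points $a,a'$ with $a\in P^{N_1}$ and $a'\notin P^{N_1}$. Both $Q^{N_2}=P^{N_1}$ and $Q^{N_2'}=(P^{N_1}\setminus\{a\})\cup\{a'\}$ give elementary extensions of $M_2$ (the theory of an infinite, co-infinite unary predicate has quantifier elimination), and both are associated to $N_1$: for $N_2'$ use the q.f.\ definitions $Q(x)\leftrightarrow(P(x)\wedge x\neq a)\vee x=a'$ and $P(x)\leftrightarrow(Q(x)\wedge x\neq a')\vee x=a$. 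So the uniqueness must be read relative to the given witness: there is a unique $N_2$ such that the association of $N_1$ with $N_2$ is witnessed by some $N^*\succeq M^*$, which is exactly Fact~\ref{fact: expansion}(3) and is the only form the paper ever uses (e.g.\ in Lemma~\ref{lem:unique} and Theorem~\ref{thm:ma noncell}). Your existence construction produces precisely this $N_2$; the uniqueness argument should cite Fact~\ref{fact: expansion}(3) under that reading rather than attempt to rule out arbitrary competing associations.
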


\begin{definition}  Suppose $M$ is a mutually algebraic $\LL$-structure.  A set $\A$ of quantifier-free, mutually algebraic $\LL_M$-formulas is {\em acceptable}
if every $R\in\LL$ is equivalent to a boolean combination of formulas from $\A$.  If $\A$ is an acceptable set of formulas, let
$D(\A):=\bigcup\{\abar:\phi(\xbar,\abar)\in\A\}$ denote the parameters from $M$ used in $\A$.
\end{definition}

Note that by Theorem~\ref{thm:ma char}(2), every $M$ admits an acceptable set $\A$.  Moreover, if $\LL$ is finite, then $\A$ and hence $D(\A)$ can be chosen to be finite as well.

\begin{definition} \label{def:expand} Suppose $M$ is a mutually algebraic $\LL$-structure and $\A$ is an acceptable set of $\LL_M$-formulas.
Let $$\LL_{\A}:=\set{\hbox{constant symbols of $\LL$}}\cup\set{c_d:d\in D(\A)}\cup\set{R_\phi(\xbar):\phi(\xbar,\abar)\in\A}$$
and let $M_{\A}$ be the $\LL_{\A}$-structure with universe $M$, $c^{M_{\A}}=c^M$ for every  $c\in \LL$, $c_d^{M_{\A}}=d$ for every $d\in D(\A)$,
and $R_\phi^{M_{\A}}:=\{\bbar\in M^{\lg(\xbar)}:M\models\phi(\bbar,\abar)\}$.
\end{definition}

\begin{lemma}  \label{lem:basic}  
Let $M$ be any mutually algebraic $\LL$-structure, $\A$ an acceptable set of $\LL_M$-formulas, and let $M_{\A}$ be the $\LL_{\A}$-structure constructed as above.
Then:
\begin{enumerate}
\item  $M_\A$ is MA-presented.
\item  $M$ and $M_{\A}$ are associated.
\item  For every $Y\subseteq M$ and every subset $X\subseteq M^k$,
\begin{enumerate}
\item  $X$ is $Y$-definable in $M_{\A}$ if and only if $X$ is $(D(\A)\cup Y)$-definable in $M$.
\item  $\acl_{M_{\A}}(Y)=\acl_{M}(D(\A)\cup Y)$.
\end{enumerate}
\end{enumerate}
\end{lemma}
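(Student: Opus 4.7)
The three claims follow fairly directly from how $M_\A$ is constructed together with the acceptability of $\A$, so my plan is to verify them in order. For (1), I would enumerate the atomic $\LL_\A$-formulas: they are equations $x=y$, equations $x=c$ for a constant symbol $c$ of $\LL_\A$, and formulas $R_\phi(\xbar)$ for some $\phi(\xbar,\abar)\in\A$. The first two kinds are trivially mutually algebraic (as equations and unary predicates). For the third, Definition~\ref{def:expand} stipulates that $R_\phi^{M_\A}$ is exactly the realization set of $\phi(\xbar,\abar)$ in $M$, so the same constant $K$ witnessing mutual algebraicity of $\phi(\xbar,\abar)$ in $M$ (which holds by acceptability of $\A$) witnesses mutual algebraicity of $R_\phi$ in $M_\A$.

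For (2), I would take $M^*$ to be the $\LL\cup\LL_\A$-structure with universe $M$ in which each symbol is interpreted as in $M$ or $M_\A$. To see $M^*$ is a q.f.\ expansion by definitions of $M$, note that every additional atomic $\LL_\A$-formula is either a constant $c_d$ (equivalent to the specific element $d\in D(\A)\subseteq M$) or a formula $R_\phi(\xbar)$, which is defined to be equivalent to the quantifier-free $\LL_M$-formula $\phi(\xbar,\abar)$. To see $M^*$ is a q.f.\ expansion by definitions of $M_\A$, I use acceptability: each $R\in\LL$ is equivalent to a boolean combination of formulas $\phi(\xbar,\abar)\in\A$, each of which in turn equals $R_\phi(\xbar)$ with the parameters $\abar\subseteq D(\A)$ replaced by the corresponding constants $c_{\abar}$ of $\LL_\A$, giving a quantifier-free $(\LL_\A)_M$-formula.

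For (3)(a), I would prove both directions by translating formulas through the association. For $\Rightarrow$: if $X$ is defined in $M_\A$ by an $\LL_\A$-formula with parameters from $Y$, I recursively replace each $R_\phi$-atom by $\phi(\cdot,\abar)$ and each constant $c_d$ by $d$, obtaining an $\LL$-formula whose parameters all lie in $D(\A)\cup Y$. For $\Leftarrow$: if $X$ is defined in $M$ by an $\LL$-formula with parameters from $D(\A)\cup Y$, then by the argument from (2) each $\LL$-atomic subformula is equivalent in $M^*$ to a boolean combination of $R_\phi$-atoms, and the parameters in $D(\A)$ are absorbed into the constants $c_d$ of $\LL_\A$, leaving only parameters from $Y$. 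Part (b) is then immediate: apply (a) to the finite $Y$-definable (resp.\ $(D(\A)\cup Y)$-definable) subsets of $M$ containing a given element $a$, which is precisely the condition defining algebraic closure.

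I do not expect a genuine obstacle; the argument is essentially bookkeeping once one has fixed the meaning of ``q.f.\ expansion by definitions'' and the role of $D(\A)$. The most delicate step is the translation in (3)(a), where one must verify that passing from $\LL_\A$ to $\LL$ introduces only parameters from $D(\A)$ and that passing from $\LL$ to $\LL_\A$ absorbs those parameters into the constants $c_d$. This bidirectional correspondence is, however, precisely built into the definition of $\LL_\A$ in Definition~\ref{def:expand}, so the verification should go through without complication.
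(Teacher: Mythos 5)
Your proposal is correct and follows the same route as the paper, which simply states that (1) and (2) are ``unpacking definitions'' and that (3a) follows by induction on formula complexity (using that the constants of $\LL$ are present in $\LL_\A$), with (3b) immediate from (3a). You have merely written out the bookkeeping the paper omits, and it all checks out.
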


\begin{proof}  The verifications of (1) and (2) are just unpacking definitions.  It is also clear that (3b) follows immediately from (3a), which itself follows from induction on the complexity of formulas, using that the constants of $\LL$ are present in $\LL_\AA$ and interpreted correctly to handle the case of atomic formulas.
\end{proof}

Now, $\acl_{M_{\A}}(\emptyset)$ can vary widely among associated MA-presentations  of $M$ (e.g.,  the set of `extra parameters' $D(\A)$ can be increased arbitrarily),
but we will see that $\sim$-classes sufficiently far away from $\acl_{M_{\A}}(\emptyset)$ are invariant under our choice of MA-presentation.
We begin with the following lemma.  In its proof, we use some basic facts from stability theory.  We say two tuples {\em $\mbar$ and $\mbar'$ have the same strong type},
$\stp(\mbar)=\stp(\mbar')$ if and only if $E(\mbar,\mbar')$ for every 0-definable equivalence relation with finitely many classes.  It is easily seen that having the same strong type implies having the same type, but the bonus is that if each of $\mbar,\mbar'$ are independent from $c$, i.e., neither tuple forks over $c$, then $\tp(\mbar,c)=\tp(\mbar',c)$.

\begin{lemma}  \label{lem:unique}  
Suppose $M_1$ and $M_2$ are both associated MA-presented $\LL_1$, $\LL_2$-structures respectively with common mutual expansion $M^*$, and
suppose $a\not\in\acl_{M^*}(\emptyset)$.  Then  $[a]_{\sim_{M_1}}=[a]_{\sim_{M_2}}$. 
\end{lemma}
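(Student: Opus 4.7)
The plan is to introduce the common mutual expansion $M^*$ as a third MA-presented structure and to establish $[a]_{\sim_{M^*}}=[a]_{\sim_{M_i}}$ for $i=1,2$, whence the conclusion follows. To see $M^*$ is MA-presented in $\LL^*:=\LL_1\cup\LL_2$, note every atomic $\LL^*$-formula is either $\LL_1$- or $\LL_2$-atomic, and mutual algebraicity of a formula is a property of the definable set and so transfers between $M_i$ and $M^*$. The hypothesis $a\not\in\acl_{M^*}(\emptyset)$ forces all structures to be infinite, so by Fact~\ref{fact:  fact2}(2) I pass to sufficiently saturated elementary extensions preserving the associations. The inclusion $[a]_{\sim_{M_i}}\subseteq[a]_{\sim_{M^*}}$ is immediate from $\LL_i\subseteq\LL^*$.

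For the reverse inclusion, fix $b\in[a]_{\sim_{M^*}}$. By Lemma~\ref{lem:linked} choose a linked $\LL^*$-atomic conjunction $\theta(x,y,\ubar)$ with $M^*\models\exists\ubar\,\theta(a,b,\ubar)$. Since $M^*$ is a q.f.\ expansion by definitions of $M_1$, this $\theta$ is equivalent on $M^*$ to a q.f.\ $\LL_1$-formula $\gamma(x,y,\ubar,\mbar)$ for some tuple $\mbar$ from $M$. The obstruction to applying Lemma~\ref{lem:support} directly in $M_1$ is that $\mbar$ may fall within $\acl_{M_1}(a)$ or $\acl_{M_1}(b)$.

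To bypass this, use that $M^*$ is weakly minimal (mutually algebraic, by Theorem~\ref{thm:ma char}) and hence stable, and realize a nonforking extension of $\tp_{M^*}(\mbar/\emptyset)$ over $ab$: obtain $\mbar'$ in $M^*$ with $\mbar\equiv_\emptyset\mbar'$ and with $\mbar'$ independent from $ab$ over $\emptyset$. Because the $\LL^*$-sentence $\forall xy\ubar(\theta(x,y,\ubar)\leftrightarrow\gamma(x,y,\ubar,\vbar))$ in free variable $\vbar$ holds of $\mbar$ and $\mbar\equiv_\emptyset\mbar'$, it also holds of $\mbar'$. Hence $M_1\models\exists\ubar\,\gamma(a,b,\ubar,\mbar')$, so the $\LL_{1,\mbar'}$-formula $\delta(x,y):=\exists\ubar\,\gamma(x,y,\ubar,\mbar')$ lies in $\tp_{M_1}(a,b)$ and is mutually algebraic as a projection of the mutually algebraic $\gamma$.

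Finally, verify $\{a,b\}\cap\acl_{M_1}(\mbar')=\emptyset$: $a\not\in\acl_{M^*}(\emptyset)$ together with independence of $a$ from $\mbar'$ gives $a\not\in\acl_{M^*}(\mbar')$; exchange applied to $b\in\acl_{M^*}(a)\setminus\acl_{M^*}(\emptyset)$ gives $a\in\acl_{M^*}(b)$, so $b\in\acl_{M^*}(\mbar')$ would force $a\in\acl_{M^*}(\mbar')$, a contradiction. Since $(M_1,\mbar')$ is MA-presented in $\LL_{1,\mbar'}$, Lemma~\ref{lem:support}(2) applied to $\delta$ and $(a,b)$ produces a linked $\LL_{1,\mbar'}$-atomic conjunction in $\tp_{M_1}(\dbar)$ for some $\dbar\supseteq(a,b)$. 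Replacing the $\mbar'$-constants in this conjunction by existentially quantified variables preserves linkedness (shared variables in the original conjunction remain shared, and the new variables are shared across those clauses that used $\mbar'$) and yields a linked $\LL_1$-atomic conjunction witnessing $a\sim_{M_1}b$ via Lemma~\ref{lem:linked}. The main obstacle is this stability-based replacement of $\mbar$ by an independent $\mbar'$ that both preserves the defining equivalence and leaves $a,b$ outside $\acl_{M_1}(\mbar')$.
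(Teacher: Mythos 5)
Your proof is correct and follows essentially the same route as the paper's: translate a witnessing mutually algebraic quantifier-free formula across the association, use stability to replace its parameters $\mbar$ by a conjugate $\mbar'$ independent from the relevant tuple, and then invoke Lemma~\ref{lem:support} to recover a linked atomic conjunction. The only cosmetic differences are that you factor the argument through the common expansion $M^*$ rather than arguing directly from $M_1$ to $M_2$ by symmetry, and that you transfer the universally quantified defining equivalence using $\tp(\mbar)=\tp(\mbar')$ where the paper instead transfers $\tp(\cbar\mbar)=\tp(\cbar\mbar')$ via strong types.
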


\begin{proof}  By passing to  sufficiently saturated elementary extensions, we may assume that $M_1$, $M_2$ and $M^*$ are all $(|\LL_1|+|\LL_2|)^+$-saturated.
By symmetry, it suffices to prove that if $a\sim_{M_1} b$, then $a\sim_{M_2} b$.  For this, choose a finite tuple $\cbar$ with $a,b\in\cbar$ and a quantifier-free, mutually
algebraic $\LL_1$-formula $\phi(\xbar)\in\tp(\cbar)$.   Since $a\not\in\acl_{M^*}(\emptyset)$, the mutual algebraicity of $\phi(\xbar)$ implies that $\cbar\cap\acl_{M^*}(\emptyset)=\emptyset$.  

   As $M_2$ is associated to $M_1$, choose a quantifier-free $\LL_2(M)$-formula $\theta(\xbar,\mbar)$ equivalent to $\phi(\xbar)$ on $M$.  
   Thus, $$M^*\models\forall\xbar (\phi(\xbar)\leftrightarrow \theta(\xbar,\mbar))$$
   As $M^*$ is sufficiently saturated, choose $\mbar'$ from $M^*$ so that ${\rm stp}_{M^*}(\mbar')={\rm stp}_{M^*}(\mbar)$ and $\mbar'$ independent from $\cbar$ over $\emptyset$
   (in $M^*$).
   As $\tp_{M^*}(\cbar,\mbar')=\tp_{M^*}(\cbar,\mbar)$, we have $M^*\models\theta(\cbar,\mbar')$, and by the independence we have $\cbar\cap\acl_{M^*}(\mbar')=\emptyset$.
   But now, since $\theta(\xbar,\mbar')$ is a quantifier-free $\LL_2(M)$-formula, 
$M_2\models\theta(\cbar,\mbar')$ and since $M_2$ is a reduct of $M^*$, $\cbar\cap\acl_{M_2}(\mbar')=\emptyset$.
Thus, by Lemma~\ref{lem:support}(3) there are  $\zbar\supseteq\xbar$, $\dbar\supseteq\cbar$ with
 $\lg(\dbar)=\lg(\zbar)$, and a linked $\LL_2$-atomic conjunction $\delta(\zbar)\in\tp(\dbar)$.  As $a,b\in\dbar$, $\dbar$ witnesses that $a\sim_{M_2} b$.
\end{proof}  

It is tempting to weaken the hypothesis of Lemma~\ref{lem:unique} to simply `$a\not\in\acl_{M_1}(\emptyset)\cup\acl_{M_2}(\emptyset)$' but the following example shows this is not sufficient.

\begin{example}  
Let $\LL_1$ contain a single binary relation $R_1$ and let $M_1$ be an $\LL_1$-structure containing infinitely many directed $\Z$-chains, $\Z^+$-chains, and $\Z^-$-chains. In some
$\Z$-chain of $M_1$, pick a point $a$ and its successor $b$, although we do not  name them by constants. Let $\LL_2$
 contain a single binary relation $R_2$, and let $M_2$ be the $\LL_2$-structure with the same universe as $M_1$ and 
 $R_2^{M_2}$ interpreted as  $R_1^{M_1}\setminus\{(a,b)\}$. 
 It is easily checked that $M_1$ and $M_2$ are associated, that $\acl_{M_1}(\emptyset)=\acl_{M_2}(\emptyset)=\emptyset$, and that the $\sim$-components are the chains.

Now $M_2$ looks the same as $M_1$,  except that the  $\Z$-chain containing $m$ and $m'$  has been broken to produce a $\Z^+$-chain 
with `minimum' element $b$ and a $\Z^-$-chain with `maximum' element $a$.
Hence $a\sim_{M_1} b$, but $a\not\sim_{M_2} b$.
\end{example}

Despite this example,  if  $M_{\A_1}$ and $M_{\A_2}$ are each associated to the same mutually algebraic $M$ as in Definition~\ref{def:expand}, we can do better.

\begin{proposition} \label{prop:acc}  Suppose $M$ is a mutually algebraic $\LL$-structure with two acceptable sets $\A_1$ and $\A_2$ of $\LL_M$-formulas.
For $\ell=1,2$, let $\acl_\ell$ denote the algebraic closure relation on $M_{\A_\ell}$ and let $[a]_\ell$ denote the $\sim$-component of $a$ in $M_{\A_\ell}$.
For any $a\in M\setminus (\acl_1(\emptyset)\cup\acl_2(\emptyset))$ we have $[a]_1=[a]_2$.
\end{proposition}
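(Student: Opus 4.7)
The plan is to translate each $[a]_\ell$ back into the original $\LL$-structure $M$ and show that in fact
$$[a]_1=[a]_2=\acl_M(a)\setminus\acl_M(\emptyset),$$
so the choice of acceptable set is irrelevant.

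The first step combines Proposition~\ref{prop: acl}, applied inside $M_{\A_\ell}$, with Lemma~\ref{lem:basic}(3b). Since $a\notin\acl_\ell(\emptyset)=\acl_M(D(\A_\ell))$, Proposition~\ref{prop: acl} yields $[a]_\ell=\acl_{M_{\A_\ell}}(a)\setminus\acl_{M_{\A_\ell}}(\emptyset)$, and then Lemma~\ref{lem:basic}(3b) rewrites this as $\acl_M(D(\A_\ell)\cup\{a\})\setminus\acl_M(D(\A_\ell))$.

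The second step exploits triviality of the pregeometry $\acl_M$. By Theorem~\ref{thm:ma char}(4), $Th(M)$ is weakly minimal and trivial, so (via finite character) $\acl_M(D)=\acl_M(\emptyset)\cup\bigcup_{d\in D}\acl_M(d)$ for every $D\subseteq M$. Applied to $D(\A_\ell)\cup\{a\}$ this gives $\acl_M(D(\A_\ell)\cup\{a\})=\acl_M(D(\A_\ell))\cup\acl_M(a)$, so $[a]_\ell=\acl_M(a)\setminus\acl_M(D(\A_\ell))$. It then suffices to show $\acl_M(a)\cap\acl_M(D(\A_\ell))\subseteq\acl_M(\emptyset)$; suppose instead some $b$ lies in this intersection but outside $\acl_M(\emptyset)$. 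Triviality applied to $b\in\acl_M(D(\A_\ell))$ yields $d\in D(\A_\ell)$ with $b\in\acl_M(d)$ and necessarily $d\notin\acl_M(\emptyset)$ (otherwise $\acl_M(d)=\acl_M(\emptyset)$ would already contain $b$). Exchange in the weakly minimal theory $Th(M)$ (as used in Proposition~\ref{prop: acl}) converts $b\in\acl_M(a)\setminus\acl_M(\emptyset)$ into $a\in\acl_M(b)$, whence $a\in\acl_M(d)\subseteq\acl_M(D(\A_\ell))=\acl_\ell(\emptyset)$, contradicting the hypothesis on $a$.

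The only real obstacle is to confirm that the pregeometric tools (the set-level decomposition of $\acl_M$ via triviality, and the symmetry provided by exchange above $\acl_M(\emptyset)$) apply as stated; both follow directly from the weak minimality and triviality of $Th(M)$ delivered by Theorem~\ref{thm:ma char}(4), so the argument goes through cleanly.
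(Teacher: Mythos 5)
Your proof is correct, but it takes a genuinely different route from the paper's. The paper proves the proposition as a quick corollary of Lemma~\ref{lem:unique}: it forms the common q.f.\ expansion $M^*$ of $M_{\A_1}$ and $M_{\A_2}$, shows $\acl_{M^*}(\emptyset)\subseteq\acl_M(D_1\cup D_2)$, uses triviality to conclude $a\notin\acl_{M^*}(\emptyset)$ from $a\notin\acl_M(D_1)\cup\acl_M(D_2)$, and then invokes Lemma~\ref{lem:unique} (whose own proof needs saturation, strong types, and forking independence). You bypass Lemma~\ref{lem:unique} entirely and instead compute both components explicitly: Proposition~\ref{prop: acl} plus the $\acl$-transfer of Lemma~\ref{lem:basic}(3b) give $[a]_\ell=\acl_M(D(\A_\ell)\cup\{a\})\setminus\acl_M(D(\A_\ell))$, and triviality together with exchange (both already used by the paper in Propositions~\ref{prop: acl} and~\ref{prop:acc}) collapse this to the presentation-independent set $\acl_M(a)\setminus\acl_M(\emptyset)$. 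Your argument is more elementary and even yields a slightly sharper conclusion, namely an explicit identification of the common value of $[a]_1$ and $[a]_2$; the trade-off is that it leans on the specific form of $M_{\A_\ell}$ from Definition~\ref{def:expand} (in particular that the parameters $D(\A_\ell)$ are named by constants, so that Lemma~\ref{lem:basic}(3b) applies), whereas the paper's detour through Lemma~\ref{lem:unique} establishes a statement valid for arbitrary associated MA-presentations, which the example following that lemma shows cannot be obtained by your method of comparing $\acl_{M_\ell}(\emptyset)$ alone.
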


\begin{proof} To ease notation, for $\ell=1,2$, write $M_\ell$ in place of $M_{\A_\ell}$ and $D_\ell$ in place of 
$D(\A_\ell)$.   As $M_1$ and $M_2$ are associated, let $M^*$ be their common q.f.\ expansion.  In light of Lemma~\ref{lem:unique}, it suffices to show 
that $a\not\in\acl_{M^*}(\emptyset)$. 
 
 As $M,M_1, M_2, M^*$ all have the same universe and the same interpretation of every constant $c\in\LL$, it follows by induction on formulas as in Lemma \ref{lem:basic} that every $X\subseteq M^k$ that is 0-definable in $M^*$ is $(D_1\cup D_2)$-definable in $M$. It then follows that $\acl_{M^*}(\emptyset)\subseteq\acl_M(D_1\cup D_2)$.  However, as $a\not\in\acl_\ell(\emptyset)$ for both $\ell=1,2$,
 it follows from Lemma~\ref{lem:basic} that $a\not\in(\acl_M(D_1)\cup\acl_M(D_2))$.  But, as $M$ is mutually algebraic, algebraic closure is trivial, hence
 $a\not\in \acl_M(D_1\cup D_2)$, hence not in $\acl_{M^*}(\emptyset)$.  Thus, $[a]_1=[a]_2$ follows from Lemma~\ref{lem:unique}.
 \end{proof}

%We close with one additional instance where the hypotheses of Lemma~\ref{lem:unique} can be weakened.
%
%
%
%\begin{corollary}  \label{cor:  outside} Suppose $M$ is mutually algebraic and $N\succeq M$.  Then the partition of $(N\setminus M)$ given by $\sim$ does not depend on the choice of an associated MA-presented structure, i.e., if $N_1$, $N_2$ are both MA-presented and associated to $N$, then $[a]_{\sim_{N_1}}=[a]_{\sim_{N_2}}$ for every $a\in (N\setminus M)$.
%\end{corollary}
%
%\begin{proof}  Choose any  MA-presented $N_1$, $N_2$ associated to $N$.   As $N_1$ and $N_2$ are associated to each other, let $N^*$ be their common mutual expansion.
% As $a\in N\setminus M$, we have $a\not\in\acl_{N^*}(\emptyset)$.  Thus, $[a]_{\sim_{N_1}}=[a]_{\sim_{N_2}}$ by Lemma~\ref{lem:unique}.
%\end{proof}

\section{Cellularity}

We begin by stating our definition of cellularity.

\begin{definition} \label{def:cell}
A structure $M$ is {\em cellular} if, for some integers $n$ and $\<k_i:i\in[n]\>$, it admits a partition $\set{K, \set{\cbar_{i,j} | i \in [n], j \in \omega}}$ satisfying the following.
\begin{enumerate}
\item $K$ is finite and  $\lg(\cbar_{i,j})=k_i$.
\item For a fixed $i$, $\set{\cbar_{i,j} | j \in \omega}$ are pairwise isomorphic over $K$. We may thus enumerate each as $\cbar_{i,j}=(c_{i,j}^1, \dots, c_{i,j}^{k_i})$.
\item For every $i \in [n]$ and $\sigma \in S_\infty$, there is a $\sigma_i^* \in Aut(M)$ mapping each $\cbar_{i,j}$ onto $\cbar_{i, \sigma(j)}$ by sending $c_{i,j}^\ell$ to $c_{i, \sigma(j)}^\ell$ for $1 \leq \ell \leq {k_i}$, and fixing $M \bs \bigcup_{j \in \omega} \cbar_{i, j}$ pointwise.
\end{enumerate}
\end{definition}

The original definition of cellularity from \cite{Schmerl} and its rephrasing in \cite{MPW} require $n=1$. But given a partition as in Definition \ref{def:cell}, we may produce one with $n=1$ by taking $\cbar_j$ to be the concatenation of $\cbar_{i,j}$ for each $i \in [n]$, so our definition is equivalent.

Note that like $\omega$-categoricity, $\omega$-stablility, and mutual algebraicity, the cellularity of a structure $M$ is preserved under reducts.
Thus, we begin with the following Lemma that does not depend on our choice of language.

\begin{lemma}  \label{lem:three}
If $M$ is cellular, then $M$ is $\omega$-categorical, $\omega$-stable, and mutually algebraic.
\end{lemma}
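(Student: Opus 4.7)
For $\omega$-categoricity, I would apply the Ryll-Nardzewski theorem: it suffices to show finitely many $n$-types over $\emptyset$ for every $n \in \omega$. The type of a tuple $\bbar \in M^n$ is determined by its \emph{cellular profile}: for each coordinate $b_\ell$, whether $b_\ell \in K$ (and if so, which specific element) or else which cell type $i \in [n]$ and position it occupies within some $\cbar_{i,j}$; together with the equivalence relation on non-$K$ coordinates given by lying in a common $\cbar_{i,j}$. There are only finitely many such profiles, and the automorphisms $\sigma_i^*$ of Definition~\ref{def:cell}(3), composed across the $n$ cell types, realize any two tuples of the same profile within a single $\mathrm{Aut}(M)$-orbit.

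For mutual algebraicity, by Theorem~\ref{thm:ma char}(2) it suffices to express every atomic $\LL$-formula $R(\xbar)$ as a boolean combination of quantifier-free mutually algebraic $\LL_M$-formulas. Using $\omega$-categoricity, $R(\xbar)$ is a finite disjunction of formulas isolating the complete types it contains. I would name a finite parameter tuple $\dbar$ consisting of all of $K$ together with $\cbar_{i,1}$ for each $i \in [n]$, which renders the cellular partition $\dbar$-definable. Each type-isolating formula then decomposes, up to equivalence, as a conjunction of (i) equalities $x_\ell = c$ for $c \in \dbar$ (algebraic, hence mutually algebraic), (ii) unary predicates $\phi_{i,\ell}(x, \dbar)$ saying ``$x$ lies in position $\ell$ of some $\cbar_{i,j}$ with $j \geq 2$'' (mutually algebraic because unary), and (iii) binary predicates $S_{i,\ell,\ell'}(x, y, \dbar)$ saying ``$x$ in position $\ell$ and $y$ in position $\ell'$ of a common $\cbar_{i,j}$ with $j \geq 2$'' (mutually algebraic, since fixing $x$ confines $y$ to at most one element of a specific cell). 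I expect this part to be the main obstacle: identifying the right parameter tuple $\dbar$ and verifying that these unary/binary predicates are jointly rich enough to isolate every complete type, so that every atomic $R$ is a boolean combination as required.

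For $\omega$-stability, I would count 1-types over an arbitrary countable $A \subseteq M$ directly. Since $A$ meets only countably many cells, a 1-type over $A$ records either (a) equality with a specific element of $K$, (b) equality with a specific element of an $A$-meeting cell, or (c) occupation of position $\ell$ in a ``fresh'' cell of type $i \in [n]$, yielding countably many possibilities overall. Alternatively, once mutual algebraicity is in hand, Theorem~\ref{thm:ma char}(4) gives that $\mathrm{Th}(M)$ is weakly minimal, and a classical result states that any $\omega$-categorical superstable theory is $\omega$-stable.
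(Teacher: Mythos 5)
Your overall strategy (verify the three properties directly in $M$ from the cellular partition) is genuinely different from the paper's. The paper instead builds an explicit expansion $M^*$ of $M$ in a new finite relational language $\LL^*$ --- constants for $K$, unary predicates $U_{i,\ell}$ for the positions, and a $k_i$-ary relation $R_i$ picking out $\set{\cbar_{i,j} : j\in\omega}$ --- observes that this ``grid-like'' $M^*$ is $\omega$-categorical, $\omega$-stable and mutually algebraic (all three are nearly immediate there; in particular $M^*$ is MA-presented, so Theorem~\ref{thm:ma char}(2) is trivial for it), and then uses the automorphism condition of Definition~\ref{def:cell}(3) to conclude that every $\LL$-definable subset of $M$ is $\LL^*$-definable in $M^*$, so $M$ is a reduct of $M^*$ and inherits all three properties, each being preserved under reducts. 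Your $\omega$-categoricity argument is correct and is essentially the ``easily verified'' part of the paper's proof carried out by hand in $M$.

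The genuine gap is in your mutual algebraicity argument, exactly where you anticipated trouble. The predicates (ii) and (iii) --- ``$x$ lies in position $\ell$ of some $\cbar_{i,j}$ with $j\ge 2$'' and its binary analogue --- need not be definable in $M$ at all, with or without the parameters $\dbar$: the enumeration of an unnamed cell is not canonical, and an automorphism fixing $K$ and $\cbar_{i,1}$ may permute the coordinates within the remaining cells. For instance, take $M$ to be a perfect matching with $K=\emptyset$ and the cells the edges $(a_j,b_j)$: the map swapping $a_2$ and $b_2$ and fixing everything else is an automorphism fixing $\dbar=(a_1,b_1)$, so the set of ``first endpoints'' of unnamed edges is a choice set and is not $\dbar$-definable. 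Hence you cannot in general realize your predicates as $\LL_M$-formulas, let alone as the \emph{quantifier-free} $\LL_M$-formulas that Theorem~\ref{thm:ma char}(2) demands. This is precisely what the passage to $M^*$ buys the paper: in $\LL^*$ the cells and positions are atomic by fiat, and mutual algebraicity of $M$ follows from that of $M^*$ by preservation under reducts. A smaller issue: your direct count of $1$-types over a countable $A$ only counts types \emph{realized} in $M$ (it proceeds via automorphisms of $M$), whereas $\omega$-stability requires bounding all consistent types (compare $(\Qq,<)$, where the realized types over $\Qq$ are few but $S_1(\Qq)$ has size continuum); your fallback via weak minimality and Lachlan's theorem is correct, but it presupposes mutual algebraicity and so does not help until the gap above is closed.
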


\begin{proof}  Call a cellular partition $\set{K, \set{\cbar_{i,j} | i \in [n], j \in \omega}}$ of $M$
{\em indecomposable} if, for each $i\in[n]$ if we partition each $\cbar_{i,j}$ as $\dbar_{i,j}$\^{}$\ebar_{i,j}$ with both sides non-empty,
then $$\set{K, \set{\cbar_{i',j} | i' \in [n], i'\neq i,  j \in \omega}\cup\set{\dbar_{i,j}:j\in\omega}\cup \set{\ebar_{i,j}:j\in\omega}}$$
is {\em not} a cellular decomposition of $M$.

As $\lg(\cbar_{i,j})$ drops each time a cellular partition is decomposed, it follows that every cellular $M$ has an indecomposable cellular partition.
Fix an indecomposable cellular  partition  $\set{K, \set{\cbar_{i,j} | i \in [n], j \in \omega}}$ witnessing the cellularity of $M$.
Form a new  finite relational language 
$$\LL^*:=\set{c_k:k\in K}\cup\set{U_{i,\ell}:i\in[n], 1\le\ell\le k_i}\cup \set{R_i(\xbar):i\in[n]}$$ 
where each $U_{i,\ell}$ is unary and each $R_i$ is a $k_i$-ary relation symbol.  Let $M^*$ be the $\LL^*$-structure with universe $M$, and interpretations $c_k^{M^*}=k$ for each $k\in K$,  $U_{i,\ell}^{M^*}:=\set{ c_{i,j}^\ell:j\in\omega}$, and $R_i^{M^*}:=\set{\cbar_{i,j}:j\in\omega}$.  It is easily verified that the `grid-like structure' $M^*$ is $\omega$-categorical, $\omega$-stable, and mutually algebraic.
Moreover, by the automorphism condition in the definition of cellularity, it follows that every $\LL$-definable set in $M$ is $\LL^*$-definable in $M^*$, hence $M$ is a  reduct  of $M^*$.
\end{proof}

The following Lemma crucially requires $\LL^*$ to be finite relational. As an example, naming infinitely many constants  destroys  $\omega$-categoricity.

\begin{lemma}  \label{lem:slide}
Suppose $\LL,\LL'$ are both finite relational languages and the $\LL$-structure $M$ is associated to the $\LL'$-structure $M'$.  Then
\begin{enumerate}
\item  $M$ is $\omega$-categorical if and only if $M'$ is.
\item  $M$ is cellular if and only if $M'$ is.
\end{enumerate}
\end{lemma}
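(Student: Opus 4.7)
The plan is to reduce both equivalences to two standard preservation principles: preservation under reducts, and preservation under naming finitely many constants.

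First I would invoke Fact~\ref{fact: expansion}(5) applied to $M^*$ in the combined language $\LL\cup\LL'$, which is finite relational since both $\LL$ and $\LL'$ are. This yields a finite tuple $\mbar$ from the common universe such that every $\LL\cup\LL'$-formula is equivalent in $M^*$ to some $\LL_{\mbar}$-formula. In particular, every $\LL'$-definable subset of $M^k$ is $\LL_{\mbar}$-definable in $M$, so $M'$ is a reduct of the constant expansion $M_{\mbar}$. By the symmetric application, $M$ is a reduct of $M'_{\mbar'}$ for some finite $\mbar'$ from the common universe.

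For part (1), both preservation properties for $\omega$-categoricity are standard via Ryll--Nardzewski: naming finitely many constants converts an $n$-type count into an $(n+k)$-type count, and reducts trivially have no more complete types in $n$ variables than the ambient structure. Hence $M$ $\omega$-categorical $\Rightarrow$ $M_{\mbar}$ $\omega$-categorical $\Rightarrow$ $M'$ $\omega$-categorical, and the converse follows by symmetry.

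For part (2), preservation of cellularity under reducts is noted just before Lemma~\ref{lem:three}. It remains to check preservation under adding a single new constant $c\in M$ (then iterate for finitely many). Given a cellular partition $\{K, \{\cbar_{i,j}\}\}$ of $M$: if $c\in K$, the same partition witnesses cellularity of $(M,c)$; otherwise $c$ lies in a unique $\cbar_{i_0,j_0}$, and I would take the new partition with enlarged finite piece $K':=K\cup \cbar_{i_0,j_0}$, deleting the block $\cbar_{i_0,j_0}$ from the $i_0$-row. Conditions (1) and (2) of Definition~\ref{def:cell} are immediate: isomorphisms $\cbar_{i_0,j}\to\cbar_{i_0,j'}$ for $j,j'\neq j_0$ arise from permutations fixing $j_0$ in the original $S_\infty$-action, and for $i\neq i_0$ the original row-$i$ isomorphisms fix everything outside $\bigcup_j\cbar_{i,j}$ and so fix $\cbar_{i_0,j_0}\subseteq K'$ pointwise. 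For condition (3), the required automorphism group on the modified $i_0$-row is the stabilizer of $j_0$ inside the original $S_\infty$-action, which is again isomorphic to $S_\infty$ acting on $\omega\setminus\{j_0\}$; the automorphisms for other rows already fix $c$ for the same reason as before.

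The main obstacle is simply verifying that the modified partition genuinely satisfies Definition~\ref{def:cell}(3) for $(M,c)$ --- the key observation being that the stabilizer of a point in $S_\infty$ is again $S_\infty$. Once this is established, both parts of the lemma follow directly from the reduct/constant-expansion setup in the first paragraph, with the finite relational hypothesis entering essentially through Fact~\ref{fact: expansion}(5).
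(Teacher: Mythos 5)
Your proposal is correct and follows essentially the same route as the paper: reduce via Fact~\ref{fact: expansion}(5) to a finite tuple $\mbar$ of named constants, prove (1) by counting $\LL$-formulas in $(n+k)$ variables via Ryll--Nardzewski, and prove (2) by absorbing the finitely many blocks meeting $\mbar$ into the finite part $K$. The only difference is cosmetic --- you add one constant at a time and spell out the verification of Definition~\ref{def:cell}(3) via point stabilizers in $S_\infty$, where the paper enlarges $K$ to $K^*:=K\cup\bigcup\{\cbar_{i,j}:\cbar_{i,j}\cap\mbar\neq\emptyset\}$ in one step and leaves that check to the reader.
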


\begin{proof}  For both parts, as both properties are preserved under reducts, it suffices to prove this in the special case where $\LL\subset \LL'$ and 
$M'$ is a q.f.\ expansion by definitions of $M$.  As $\LL'$ is finite relational, by Fact~\ref{fact: expansion}(5), choose a finite $\mbar$ from $M'$ so that
every $\LL'$-formula is equivalent in $M'$ to an $\LL_{\mbar}$-formula.

For (1), assume $M$ is $\omega$-categorical.  Let $k:=\lg(\mbar)$.
Then, up to equivalence in $M'$, for every $n\ge 1$, the number of $\LL'$-formulas $\phi(\xbar)$ with at most $n$ free variables is at most the number of $\LL$-formulas
with at most $(n+k)$ free variables.  As the latter number is finite, $M'$ is $\omega$-categorical by Ryll-Nardzewski's theorem.

For (2), assume $M$ is cellular.  Fix a partition $\set{K, \set{\cbar_{i,j} | i \in [n], j \in \omega}}$ witnessing this.
By Lemma~\ref{lem:three}, $M$ is $\omega$-categorical, hence $M'$ is $\omega$-categorical by (1).  Let 
$$K^*:=K\cup\bigcup\set{ \cbar_{i,j}:\cbar_{i,j}\cap\mbar\neq\emptyset}$$
Note that $K^*$ is finite, and after reindexing (removing finitely many tuples) $M'$ can be partitioned as  $\set{K^*, \set{\cbar_{i,j} | i \in [n], j \in \omega}}$.
It is easily checked that this partition witnesses that $M'$ is cellular.
\end{proof}

As a warm-up to proving Theorem~\ref{thm:arbitrary}, we first characterize cellularity among  infinite, finitely MA-presented structures.

\begin{proposition}  \label{prop:finpresented}
The following are equivalent for an infinite, finitely MA-presented $\LL$-structure $M$:
\begin{enumerate}
\item  $M$ is cellular;
\item  $M$ is $\omega$-categorical;
\item  There is a uniform finite bound on $|\acl(a)|$ for $a\in M$;
\item  There is a uniform finite bound on $|[a]_\sim|$ for $a\in M$.
\end{enumerate}
\end{proposition}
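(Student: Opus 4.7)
The plan is to prove the cycle $(1) \Rightarrow (2) \Rightarrow (3) \Rightarrow (4) \Rightarrow (1)$. The first implication is immediate from Lemma~\ref{lem:three}. For $(2) \Rightarrow (3)$, I would invoke Ryll-Nardzewski: under $\omega$-categoricity there are finitely many $1$-types over $\emptyset$, and since $\acl(a)$ is determined up to $\Aut(M)$ by $\tp(a)$, the size $|\acl(a)|$ depends only on $\tp(a)$, so the maximum over these finitely many types gives a uniform bound. For $(3) \Rightarrow (4)$, let $N$ bound $|\acl(a)|$. Applying this to any $a \in M$ gives $|\acl(\emptyset)| \leq N$, since $\acl(\emptyset) \subseteq \acl(a)$. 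If $a \notin \acl(\emptyset)$, Proposition~\ref{prop: acl} gives $[a]_\sim = \acl(a) \setminus \acl(\emptyset)$, of size at most $N$. If $a \in \acl(\emptyset)$, the forward direction of Proposition~\ref{prop: acl} still applies to show $a \sim b$ forces $b \in \acl(a) \subseteq \acl(\emptyset)$, so $[a]_\sim \subseteq \acl(\emptyset)$ and again $|[a]_\sim| \leq N$.

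The main content is $(4) \Rightarrow (1)$. Let $N$ bound $|[a]_\sim|$. Since $\LL$ is finite relational, there are only finitely many $\LL$-structures of cardinality at most $N$ up to isomorphism, so the $\sim$-components of $M$ partition into finitely many $\LL$-isomorphism classes. Split the realized iso classes into those occurring only finitely often in $M$ (whose union, call it $K$, is a finite subset of $M$) and those occurring infinitely often. For each infinite class, indexed by $i$, enumerate its components as $\{C_{i,j} : j \in \omega\}$, fix $\LL$-isomorphisms $\phi_{i,j} : C_{i,0} \to C_{i,j}$ with $\phi_{i,0}$ the identity, pick an ordering $(c_{i,0}^1, \dots, c_{i,0}^{k_i})$ of $C_{i,0}$, and define $c_{i,j}^\ell := \phi_{i,j}(c_{i,0}^\ell)$. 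Then $K$ together with the tuples $\cbar_{i,j} = (c_{i,j}^1, \dots, c_{i,j}^{k_i})$ partitions $M$, verifying clauses (1) and (2) of Definition~\ref{def:cell}. For clause (3), given $\sigma \in S_\infty$ and $i$, define $\sigma_i^* : M \to M$ to send $c_{i,j}^\ell$ to $c_{i,\sigma(j)}^\ell$ and to fix $M \setminus \bigcup_{j \in \omega} \cbar_{i,j}$ pointwise. By Lemma~\ref{lem: component}, $\sigma_i^*$ is an automorphism of $M$: it is a component map (permuting the $C_{i,j}$'s among themselves and fixing every other component setwise) and it restricts to the $\LL$-isomorphism $\phi_{i,\sigma(j)} \circ \phi_{i,j}^{-1}$ on $C_{i,j}$ and to the identity on every other component.

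The finite relational hypothesis is essential in $(4) \Rightarrow (1)$ to bound the number of component isomorphism types; without it a uniform bound on $|[a]_\sim|$ need not yield cellularity. I expect the main delicate point to be verifying the automorphism condition in Definition~\ref{def:cell}, which ultimately reduces to Lemma~\ref{lem: component} but requires choosing the enumerations of components in each infinite class compatibly via fixed isomorphisms to a common reference component, so that the naive "permute indices on the $i$-th family, identity elsewhere" map actually assembles into an $\LL$-component map.
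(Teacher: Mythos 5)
Your proposal is correct and follows essentially the same route as the paper: $(1)\Rightarrow(2)$ via Lemma~\ref{lem:three}, $(2)\Rightarrow(3)$ by Ryll-Nardzewski, $(3)\Rightarrow(4)$ via Proposition~\ref{prop: acl}, and $(4)\Rightarrow(1)$ by sorting the (boundedly finite) $\sim$-components into finitely many types, absorbing the finitely-occurring ones into $K$, and invoking Lemma~\ref{lem: component} for the automorphism condition. The only cosmetic difference is that the paper organizes the components by the quantifier-free types of chosen enumerations rather than by isomorphism class with fixed reference isomorphisms, and your handling of the case $a\in\acl(\emptyset)$ in $(3)\Rightarrow(4)$ is in fact slightly more careful than the paper's.
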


We remark that this list could be extended.  For example such an $M$ is cellular if and only if the binary relation $\sim$ is $\LL$-definable.

\begin{proof}  $(1)\Rightarrow(2)$ is by Lemma~\ref{lem:three}.

$(2)\Rightarrow(3)$:   This direction holds for any    $\omega$-categorical $M$.  
As there are only finitely many distinct 1-types, and only finitely many inequivalent 2-formulas $\delta(x,y)$, there is an integer $K$ so that $M\models\exists ^{<K}x\delta(x,a)$
among all $a\in M$ and all algebraic formulas $\delta(x,a)$.  From this, a uniform bound on $|\acl(a)|$ can be found.

 $(3)\Rightarrow(4)$ is immediate, since by Proposition~\ref{prop: acl}, $[a]_\sim\subseteq\acl(a)$ for every $a\in M$.

$(4)\Rightarrow(1)$:  
Choose enumerations $\cbar$ for each $[a]_\sim$-class in $M$.  As $L$ is finite and there is a uniform bound on $\lg(\cbar)$, the set of quantifier-free types
$qftp(\cbar)$ occurring is finite.  Let $\{p_i(\xbar_i):i\in[n]\}$ enumerate the quantifier-free types that occur infinitely often, and let 
$K:=\bigcup\{\cbar:qftp(\cbar)$ occurs only finitely many times$\}$.  Then $K\subseteq M$ is finite.  For each $i\in[n]$, let $\{\cbar_{i,j}:j\in\omega\}$ enumerate
the tuples $\cbar$ realizing $p_i(\xbar_i)$.  Recalling that each $\cbar_{i,j}$ is an enumeration of a $\sim$-class, Lemma~\ref{lem: component} implies that
$\set{K, \set{\cbar_{i,j} | i \in [n], j \in \omega}}$ is a cellular decomposition of $M$.
 \end{proof}

Next, we relax the assumption that a mutually algebraic $M$ is finitely MA-presented, but keep the assumption of a finite relational language.
For this, we begin with the following lemma.

\begin{lemma}  \label{lem:transferacl}
Suppose $M$ and $M'$ are associated mutually algebraic structures in finite relational languages $\LL$ and $\LL'$, respectively.
If there is a uniform finite bound on $|\acl_M(a)|$ for each $a\in M$, then there is a uniform finite bound on $|\acl_{M'}(a)|$ for each $a\in M$.
\end{lemma}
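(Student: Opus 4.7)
The plan is to exploit the finite relationality of $\LL'$ to translate algebraic closure computations in $M'$ back into $M$ at the cost of a single finite parameter set, and then to use triviality of the algebraic closure in a mutually algebraic structure to bound the resulting closures term-by-term.

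First, since $M$ and $M'$ are associated, fix a common q.f.\ expansion by definitions $M^*$ (in the language $\LL\cup\LL'$) on their shared universe. Because $\LL'$ is finite relational, there are only finitely many atomic $\LL'$-formulas up to renaming of variables. Each such atomic $\LL'$-formula is, by the definition of q.f.\ expansion by definitions of $M$, equivalent in $M^*$ to a quantifier-free $\LL_M$-formula, which uses only finitely many parameters. Collecting all these parameters yields a finite set $D\subseteq M$ such that every atomic $\LL'$-formula is equivalent in $M^*$ to a q.f.\ $\LL_D$-formula. A routine induction on formula complexity, using that boolean connectives and quantifiers preserve the parameter set, shows that every $\LL'$-formula is equivalent in $M^*$ to an $\LL_D$-formula. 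Since $M^*$ is an expansion of both $M$ and $M'$ (they share a universe), this equivalence is an equality of definable sets.

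Given $a\in M$ and $b\in\acl_{M'}(a)$, choose an $\LL'$-formula $\varphi(x,a)$ witnessing algebraicity. Translating $\varphi$ into its equivalent $\LL_D$-formula $\psi(x,a,\bar d)$ produces a formula with the same (finite) solution set in $M$, so $b\in\acl_M(\{a\}\cup D)$. Hence
\[
\acl_{M'}(a)\;\subseteq\;\acl_M(\{a\}\cup D).
\]
Now I invoke mutual algebraicity of $M$: by Theorem~\ref{thm:ma char}(4), $\mathrm{Th}(M)$ is weakly minimal and trivial, so the pregeometry attached to $\acl_M$ is trivial. In particular, for the nonempty set $\{a\}\cup D$,
\[
\acl_M(\{a\}\cup D)\;=\;\acl_M(a)\cup \bigcup_{d\in D}\acl_M(d).
\]
Letting $K$ denote the uniform bound on $|\acl_M(c)|$ for $c\in M$, we conclude $|\acl_{M'}(a)|\le K(1+|D|)$, giving the required uniform bound, independent of $a$.

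The main obstacle is really Step~1: one must use the finite relationality of $\LL'$ (and not merely its countability) to ensure that the parameter set $D$ extracted from the q.f.\ expansion by definitions is \emph{finite}. Without this, $\acl_{M'}(a)$ could land in $\acl_M$ of $a$ together with an unbounded tuple of parameters, and the triviality argument in Step~3 would bound it only by an unbounded union. Triviality itself is an immediate consequence of Theorem~\ref{thm:ma char}(4), and the translation of $\acl_{M'}$ into $\acl_M$ via the common expansion is a straightforward unpacking of Fact~\ref{fact:  fact2}(1).
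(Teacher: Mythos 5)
Your proposal is correct and follows essentially the same route as the paper: extract a finite parameter tuple from the q.f.\ expansion by definitions (this is exactly Fact~\ref{fact: expansion}(5), which the paper cites directly), conclude $\acl_{M'}(a)\subseteq\acl_M(\{a\}\cup D)$, and then use triviality of algebraic closure in the mutually algebraic $M$ to bound this union term-by-term. The paper's proof is just a more compressed version of the same argument, with the bound $N(k+1)$ where $k=|D|$.
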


\begin{proof}  Suppose $|\acl_M(a)|\le N$ for every $a\in M$.  Then, for any finite set $A\subseteq M$, since $\acl_M(A)=\bigcup\{\acl_M(a):a\in A\}$ for every
$a\in A$, $|\acl_M(A)|\le N\cdot|A|$.  Now,  it follows from Fact~\ref{fact: expansion}(5) that there is a finite $\mbar$ from $M$ so that every $\LL'$-formula is equivalent on $M$ to
an $\LL_{\mbar}$-formula.  Fix any $a\in M$.  From the above, if $b\in\acl_{M'}(a)$, then $b\in\acl_M(a\mbar)$.   Thus, if $k=|\mbar|$, then $\acl_{M'}(a\mbar)\le N(k+1)$.
\end{proof}

\begin{theorem}  \label{thm:finiterelational}
The following are equivalent for an infinite, mutually algebraic $\LL$-structure $M$ in a finite relational language $\LL$.
\begin{enumerate}
\item  $M$ is cellular;
\item $M$ is $\omega$-categorical;
\item  There is a uniform finite bound on $|\acl(a)|$ for all $a\in M$;
\item  For every finitely MA-presented $M'$ associated to $M$, there is a uniform finite bound on $|[a]_\sim|$ for $a\in M'$;
\item  For some finitely MA-presented $M'$ associated to $M$, there is a uniform finite bound on $|[a]_\sim|$ for $a\in M'$.
\end{enumerate}
\end{theorem}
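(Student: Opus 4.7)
The plan is to establish the five equivalences by proving a cycle $(1)\Rightarrow(2)\Rightarrow(3)\Rightarrow(4)\Rightarrow(5)\Rightarrow(1)$, using Proposition~\ref{prop:finpresented} as the bridge between the mutually algebraic setting and the stronger finitely MA-presented setting, together with the transfer lemmas already developed.

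The implications $(1)\Rightarrow(2)$ and $(2)\Rightarrow(3)$ are the easy endpoints: the first is immediate from Lemma~\ref{lem:three}, and the second is a routine Ryll-Nardzewski argument as already carried out in the proof of Proposition~\ref{prop:finpresented}, since neither direction uses the MA-presented hypothesis. For $(3)\Rightarrow(4)$, I would fix any finitely MA-presented $M'$ associated to $M$, apply Lemma~\ref{lem:transferacl} to push the uniform bound on $|\acl_M(a)|$ across the association (this is where the finite relational hypothesis on $\LL'$ is consumed), and then invoke the implication $(3)\Rightarrow(4)$ of Proposition~\ref{prop:finpresented} inside $M'$ to get a uniform bound on $|[a]_\sim|$ there.

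For $(4)\Rightarrow(5)$, I need to exhibit at least one finitely MA-presented $M'$ associated to $M$; since $\LL$ is finite relational, Theorem~\ref{thm:ma char}(2) provides a finite acceptable set $\A$ of $\LL_M$-formulas, and then $M_\A$ of Definition~\ref{def:expand} is finitely MA-presented (its language has only finitely many constants and relations) and associated to $M$ by Lemma~\ref{lem:basic}. For $(5)\Rightarrow(1)$, starting from such an $M'$ with bounded $\sim$-classes, the implication $(4)\Rightarrow(1)$ of Proposition~\ref{prop:finpresented} gives that $M'$ is cellular, and then Lemma~\ref{lem:slide}(2) transports cellularity back to $M$, completing the cycle.

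The only real care needed is bookkeeping of languages: Lemma~\ref{lem:slide} requires \emph{both} $\LL$ and $\LL'$ to be finite relational, which is why the theorem's standing hypothesis that $\LL$ is finite relational is essential, and why Lemma~\ref{lem:basic} must be applied with a finite acceptable $\A$ so that $\LL_\A$ is also finite relational. Beyond this, there is no substantial obstacle; the new content of the theorem sits entirely in the preparatory Lemmas~\ref{lem:transferacl}, \ref{lem:slide}, and Proposition~\ref{prop:finpresented}, and the present theorem is essentially the act of stitching them together along associations.
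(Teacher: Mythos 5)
Your proposal is correct and follows essentially the same route as the paper: the same cycle $(1)\Rightarrow(2)\Rightarrow(3)\Rightarrow(4)\Rightarrow(5)\Rightarrow(1)$, using Lemma~\ref{lem:three} and the Ryll-Nardzewski argument for the first two steps, Lemma~\ref{lem:transferacl} plus Proposition~\ref{prop:finpresented} for $(3)\Rightarrow(4)$, a finite acceptable set via Lemma~\ref{lem:basic} for $(4)\Rightarrow(5)$, and Proposition~\ref{prop:finpresented} plus Lemma~\ref{lem:slide} for $(5)\Rightarrow(1)$. Your added remarks on where the finite relational hypotheses are consumed are accurate and consistent with the paper's argument.
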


\begin{proof} 
 $(1)\Rightarrow(2)$ is given by Lemma~\ref{lem:three}. 

$(2)\Rightarrow(3)$:  As in the proof of Proposition~\ref{prop:finpresented}, (3) holds for any $\omega$-categorical $M$.

$(3)\Rightarrow(4)$:  Choose any finitely MA-presented $M'$ associated to $M$.
By (3) and Lemma~\ref{lem:transferacl}, there is a uniform bound on $|\acl(a)|$ among $a\in M$, hence there is a uniform bound on
$|[a]_\sim|$ by Proposition~\ref{prop:finpresented} applied to $M'$.

$(4)\Rightarrow(5)$:   This follows immediately from Lemma~\ref{lem:basic}, noting that a finite acceptable set $\A$ can be found.

$(5)\Rightarrow(1)$:  Given $M'$ witnessing (5), $M'$ is cellular by Proposition~\ref{prop:finpresented}, hence $M$ is cellular by Lemma~\ref{lem:slide}.
\end{proof}

Finally, we want to identify cellular structures `in the rough' where the original language might not be finite.  
We do know that on one hand, that if  $M$ cellular, it is describable in a finite relational language (see e.g., the proof of Lemma~\ref{lem:three}). 

On the other hand, extending to an infinite language (even with only relation symbols) can be troublesome.

\begin{example} \label{ex:unary} Let $\LL=\set{U_n:n\in\omega}$ and let $M$ be a countable model of the theory of `independent unary predicates.'  Then $M$ is mutually algebraic,
and in fact, MA-presented in this language.  Additionally, $[a]_\sim=\{a\}=\acl(a)$ for all $a\in M$, hence there are uniform, finite bounds (namely one) for the sizes of these sets.
Despite that, $M$ is not cellular.
\end{example}

\begin{theorem} \label{thm:arbitrary} Suppose $M$ is a countable structure in a countable language.  Then $M$ is cellular if and only if $M$ is mutually algebraic and $\omega$-categorical.
\end{theorem}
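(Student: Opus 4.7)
The forward direction of the theorem is Lemma~\ref{lem:three}. For the converse, assume $M$ is $\omega$-categorical and mutually algebraic in a countable language $\LL$ (the finite case being immediate). The plan is to reduce to the finitely MA-presented setting of Proposition~\ref{prop:finpresented}: I will produce a finite acceptable set $\A$ of quantifier-free mutually algebraic $\LL_M$-formulas, form the associated finitely MA-presented structure $M_\A$, apply Proposition~\ref{prop:finpresented} to see $M_\A$ is cellular, and finally transfer cellularity back to $M$ via the association.

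By Ryll-Nardzewski, $\acl_M(\emptyset)$ is finite and there is some $N$ with $|\acl_M(a)|\le N$ uniformly for all $a\in M$. By Theorem~\ref{thm:ma char}(4), $M$ is weakly minimal and trivial, so $\acl_M$ distributes over unions outside $\acl_M(\emptyset)$, and any two tuples from distinct $\acl_M$-classes outside $\acl_M(\emptyset)$ are forking-independent over $\emptyset$. Set $N^* = N + |\acl_M(\emptyset)|$. By triviality together with the uniqueness of free amalgamation of types in a weakly minimal trivial theory, for any distinct tuple $\bbar$ from $M$ the type $\tp(\bbar/\emptyset)$ is determined by the $\acl_M$-class partition of $\bbar$'s indices (together with $\acl_M(\emptyset)$-membership) and by the types of the resulting sub-tuples, each of arity at most $N^*$. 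Consequently every atomic $\LL$-formula $R(\xbar)$ is equivalent on $M$ to a boolean combination of $0$-definable $\LL$-formulas of arity at most $N^*$. Ryll-Nardzewski applied to each arity $\le N^*$ leaves only finitely many such $0$-definable formulas up to equivalence on $M$; applying Theorem~\ref{thm:ma char}(2) to each such representative gives finitely many q.f.~mutually algebraic $\LL_M$-formulas, the collection of which is the desired finite $\A$.

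With $\A$, and hence $D(\A)$, finite, $\LL_\A$ from Definition~\ref{def:expand} is finite relational and $M_\A$ is finitely MA-presented and associated to $M$ (Lemma~\ref{lem:basic}). $M_\A$ is $\omega$-categorical since adding finitely many constants and finitely many $0$-definable relations to an $\omega$-categorical structure preserves $\omega$-categoricity, so Proposition~\ref{prop:finpresented} gives that $M_\A$ is cellular. Because $M$ and $M_\A$ share a common quantifier-free expansion by definitions, they have the same automorphism group, and every $\LL$-atomic formula is a boolean combination of $\LL_\A$-atomic formulas; hence the cellular partition witnessing cellularity of $M_\A$ also witnesses it for $M$. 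The main obstacle is establishing the finiteness of $\A$: triviality of algebraic closure (from mutual algebraicity) is what decomposes arbitrary atomic $\LL$-formulas into bounded-arity pieces, and Ryll-Nardzewski (from $\omega$-categoricity) then forces only finitely many such pieces to arise up to equivalence.
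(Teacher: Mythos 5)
Your reduction to the finitely MA-presented case is the right target, and the forward direction is fine, but the construction of the finite acceptable set $\A$ --- the heart of your argument --- has two genuine gaps. First, the claim that $\tp(\bbar)$ is determined by the $\acl$-class partition of $\bbar$ (plus $\acl_M(\emptyset)$-membership) together with the types over $\emptyset$ of the blocks is false: take $M=(M,E)$ with $E$ an equivalence relation with exactly two infinite classes (the paper's own running example). There $\acl_M(\emptyset)=\emptyset$, every $\acl$-class is a singleton, and all elements realize the same $1$-type, yet $\tp(a,b)$ depends on whether $E(a,b)$ holds. Uniqueness of the free amalgam requires \emph{stationarity}, i.e., you must work with strong types over $\acl^{eq}(\emptyset)$, not types over $\emptyset$; and strong-type classes (like the two $E$-classes) need not be $0$-definable, only definable over parameters. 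Consequently your derived claim that every atomic $\LL$-formula is a boolean combination of $0$-definable formulas of arity at most $N^*$ fails for this same $M$: the only $0$-definable unary sets are $\emptyset$ and $M$, so $E(x,y)$ is not such a boolean combination. This is exactly why acceptable sets are defined using $\LL_M$-formulas.

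Second, even after repairing this by naming parameters, your appeal to Theorem~\ref{thm:ma char}(2) is a misapplication: that clause converts \emph{atomic} $\LL$-formulas into boolean combinations of quantifier-free mutually algebraic $\LL_M$-formulas, not arbitrary definable sets of bounded arity. A general $0$-definable set in a mutually algebraic structure need not be a boolean combination of q.f.\ $\LL_M$-formulas at all --- e.g., in a directed perfect matching the complete $1$-type ``$x$ is a source'' ($\exists y\, R(x,y)$) cannot be so expressed, since a source and a sink far from any finite parameter set have the same q.f.\ $\LL_M$-type over those parameters; Theorem~\ref{thm:ma char}(3) only yields \emph{existentially quantified} mutually algebraic formulas, which are not admissible in an acceptable set. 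So your finite $\A$ is not produced. The paper's proof avoids both issues by a different reduction: mutual algebraicity gives superstability, Lachlan's theorem upgrades $\omega$-categorical superstable to $\omega$-stable, and the standard fact that an $\omega$-stable $\omega$-categorical theory is interdefinable with a reduct in a \emph{finite relational} language lands one in the setting of Theorem~\ref{thm:finiterelational}, where Theorem~\ref{thm:ma char}(2) applied to the finitely many atomic formulas legitimately yields a finite acceptable set. (A small further slip: $M$ and $M_\A$ do not have the same automorphism group, since $M_\A$ names the constants in $D(\A)$; the transfer of cellularity back to $M$ should instead go through Lemma~\ref{lem:slide} or preservation under reducts.)
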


\begin{proof}  That cellularity implies mutual algebraicity and $\omega$-categoricity is Lemma~\ref{lem:three}.  Conversely, assume $M$ is mutually algebraic and $\omega$-categorical.
We reduce to the case of a finite relational language, and so finish by Theorem~\ref{thm:finiterelational}.  Recall Lachlan's theorem, \cite[Proposition 1.6]{Lach0} that a superstable, $\omega$-categorical theory in countable language is $\omega$-stable.

 Thus, as $M$ is mutually algebraic  implies $Th(M)$ is weakly minimal and superstable, our $\omega$-categorical $M$ is $\omega$-stable as well.  Thus, by e.g., \cite[{Ch. 3, Lemmas 1.7, 3.9}]{Pill}, $Th(M)$ is
 interdefinable with some reduct $M_0$ in  a finite  relational language.
It follows that $M_0$ is cellular by Theorem~\ref{thm:finiterelational}.  But, as $M$ is interdefinable with $M_0$, $M$ is cellular as well.
\end{proof}

From these results, we obtain a characterization of cellularity that does not mention mutual algebraicity.

\begin{definition}
A countable structure $M$ is {\em monadically $\omega$-categorical} if every expansion of $M$ by (finitely many) unary predicates is $\omega$-categorical.
\end{definition}

\begin{theorem} \label{thm:mon cat}
  A countable structure $M$ is monadically $\omega$-categorical if and only if $M$ is cellular.
\end{theorem}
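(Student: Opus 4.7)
Plan: I would handle the two directions separately, reducing the backward direction to Theorem~\ref{thm:arbitrary}.

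For the forward direction, suppose $M$ is cellular with decomposition $\{K, \{\cbar_{i,j} : i \in [n], j \in \omega\}\}$. Given finitely many unary predicates $\bar U$, there are at most $2^{|\bar U| \cdot k_i}$ possible quantifier-free $\bar U$-types of a $k_i$-tuple, so each family $\{\cbar_{i,j} : j \in \omega\}$ refines into finitely many subfamilies of common $\bar U$-type. The finite subfamilies can be absorbed into $K$; each remaining infinite subfamily still satisfies the orbit condition of Definition~\ref{def:cell}, since restricting each original $\sigma_i^*$ to permute only among tuples of a common $\bar U$-type automatically preserves $\bar U$. Hence $(M, \bar U)$ is cellular, and so $\omega$-categorical by Lemma~\ref{lem:three}.

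For the backward direction, taking no predicates in the monadic hypothesis gives that $M$ is $\omega$-categorical, so by Theorem~\ref{thm:arbitrary} it suffices to show $M$ is mutually algebraic. I would argue the contrapositive: assume $M$ is $\omega$-categorical but not mutually algebraic, and produce a unary expansion of $M$ that fails $\omega$-categoricity. By Theorem~\ref{thm:ma char}(4), $Th(M)$ is either not weakly minimal or not trivial, and combined with the Ryll-Nardzewski-type characterization from \cite{LT1} this produces a definable family of ``interlocking'' tuples witnessing the failure (an indiscernible sequence giving the order property if $Th(M)$ is unstable, or a witness to non-trivial forking or unbounded U-rank if $Th(M)$ is stable). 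I would then select a unary predicate $U$ carving an asymmetric subset of such a family—arranged so that the relative position of a point within $U$ is detected by an infinite first-order family $\{\phi_k(x) : k \in \omega\}$ of formulas realized on pairwise distinct elements—producing infinitely many $1$-types in $(M, U)$ and contradicting monadic $\omega$-categoricity.

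The main obstacle is the last step: extracting a usable combinatorial witness from the abstract failure of mutual algebraicity and turning it into an explicit predicate $U$ in each regime. I expect the construction to split into the stable and unstable subcases, with the unstable case handled by an indiscernible-sequence argument and the stable case requiring a finer analysis of U-rank or forking geometry inherited from weak minimality and triviality in Theorem~\ref{thm:ma char}(4).
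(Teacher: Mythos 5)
Your forward direction is fine and is exactly the argument the paper leaves as ``easy to see'': refine each family $\{\cbar_{i,j} : j\in\omega\}$ by quantifier-free $\bar U$-type, absorb the finitely many finite subfamilies into $K$, and observe that permutations within a $\bar U$-type class still extend to automorphisms. Your reduction of the backward direction to Theorem~\ref{thm:arbitrary} (take no predicates to get $\omega$-categoricity, then show mutual algebraicity) is also the paper's reduction.

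The genuine gap is the heart of the backward direction: passing from ``$M$ is not mutually algebraic'' to a concrete finite unary expansion with infinitely many $1$-types. The paper does not prove this from scratch; it invokes two cited theorems (Theorem~3.3(3) of \cite{MA} together with Theorem~3.2(4) of \cite{Tri}) whose combination says precisely that a non--mutually-algebraic structure admits an expansion by \emph{finitely many unary predicates} in which there is a definable $D$ and a definable equivalence relation $E$ on $D$ with infinitely many infinite classes $\{C_n\}$; one then adds one more unary predicate meeting $C_n$ in exactly $n$ points to kill $\omega$-categoricity. Your sketch replaces this citation with a proposed case analysis (unstable via the order property and indiscernibles; stable via forking geometry and failure of weak minimality or triviality), but you never produce the predicate $U$ or the infinite family of formulas $\{\phi_k(x)\}$ in either case --- you explicitly flag this as ``the main obstacle.'' That obstacle is not a routine verification: it is essentially the content of the cited results (a Baldwin--Shelah-style analysis showing that failure of mutual algebraicity is witnessed monadically), and the two regimes you name do not obviously line up with the actual dividing line, which is failure of the conjunction ``weakly minimal and trivial,'' not stability versus instability. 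As written, the backward direction is a plan for a proof rather than a proof; you either need to quote the relevant results from the literature, as the paper does, or supply the construction of the equivalence relation (or an equivalent bad configuration) in a finite monadic expansion.
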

\begin{proof}
It is easy to see that if $M$ is cellular, then any expansion by finitely many unary predicates still admits a partition as in Definition \ref{def:cell}, although the value of $n$ for the partition might increase.

Now suppose $M$ is monadically $\omega$-categorical.  As $M$ itself is $\omega$-categorical, by Theorem~\ref{thm:arbitrary} it suffices to show that $M$ is mutually algebraic.
Assume this is not the case.   Then, by combining \cite[{Theorem 3.3(3)}]{MA} with \cite[{Theorem 3.2(4)}]{Tri}, we conclude there is an expansion $M^*$ of $M$ by finitely many unary predicates such that there are $M^*$-definable $D\subseteq (M^*)^1$ and $E\subseteq D^2$, where $E$ is an equivalence relation on $D$ with infinitely many infinite classes $\{C_n:n\in\omega\}$.  But then, adding a single, new unary predicate $U$ containing exactly $n$ elements of each $C_n$, we get a monadic expansion of $M$ with infinitely many 1-types, contradicting $M$ being monadically $\omega$-categorical.
%If $M$ is unstable, then by \cite[{Lemma 8.1.3}]{BS}, there is some unary expansion $M'$ of $M$ and formula $\phi(x,y)$ (on singletons) with the strict order property in $M'$, so some $\psi(x,y)$ defines a partial order with infinite chains. Adding another unary predicate defining an infinite discrete subset of one such chain yields infinitely many 2-types.
%
%Suppose $M$ is also stable. If $M$ is mutually algebraic, we are finished by Theorem \ref{thm:arbitrary}, so suppose not. Then by \cite{Tri}, there is some unary expansion $M'$ of $M$ admitting a definable equivalence relation with infinitely many infinite classes. Enumerating $\omega$ of these classes and adding a unary predicate picking $n$ elements from the $n^{th}$ class yields infinitely many 1-types.
\end{proof}

We remark that the restriction of adding only finitely many unary predicates is necessary, see e.g., Example~\ref{ex:unary} -- If $\LL$ is just equality and $M$ is an infinite $\LL$-structure, then $M$ is cellular, but there is an expansion by $\omega$ unary predicates that is not cellular.

Our final result gives another characterization of cellularity among mutually algebraic structures and allows us to produce an extension of a mutually algebraic non-cellular model analogous to an extension of a non-mutually algebraic model by infinitely many infinite arrays all realizing the same quantifier-free type. Thus we may reproduce the ``many models'' behavior of the non-mutually algebraic case in the mutually algebraic non-cellular case.

\begin{theorem} \label{thm:ma noncell}
Let $\LL$ be finite relational, and suppose $M$ is mutually algebraic but non-cellular countable $\LL$-structure. Then there is some $M^* \succ M$ such that $M^*$ contains infinitely many new infinite MA-connected components, pairwise isomorphic over $M$. Furthermore, we may take the universe of $M^*$ to be the universe of $M$ together with these new components.
\end{theorem}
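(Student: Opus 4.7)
The plan is to reduce to an associated finitely MA-presented structure, use an ultrapower to manufacture a single infinite new MA-connected component, multiply it via saturation, and trim the result to an elementary extension whose universe is $M$ together with the new components.

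By Lemma~\ref{lem:basic}, choose a finite acceptable set $\A$ of $\LL_M$-formulas (possible since $\LL$ is finite relational) and pass to the associated finitely MA-presented $\LL_\A$-structure $M':=M_\A$. By Theorem~\ref{thm:finiterelational}, the non-cellularity of $M$ yields no uniform finite bound on $|[a]_\sim|$ in $M'$. Choose distinct $a_n\in M'$ with $|[a_n]_\sim|\ge n$, and inside each $[a_n]_\sim$ a path $a_n=b_{n,1},\dots,b_{n,n}$ in the Gaifman graph of atomic relations. In a non-principal ultrapower $N':=(M')^\omega/\UU$, set $c:=[a_n]_\UU$ and $\beta_i:=[b_{n,i}]_{n\ge i}$; the $\beta_i$ are pairwise distinct and $c\in N'\setminus M'$. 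Since $\LL_\A$ is finite relational, only finitely many atomic formulas (and positions) can witness each mate relation $b_{n,i}\sim b_{n,i+1}$, so a pigeonhole on $\UU$ combined with the fundamental theorem of ultraproducts yields, for each $i$, a single atomic formula witnessing $\beta_i\sim\beta_{i+1}$ in $N'$. Hence $[c]_\sim^{N'}$ is infinite, and $c\notin\acl_{N'}(\emptyset)=\acl_{M'}(\emptyset)\subseteq M'$.

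Pass to a sufficiently saturated $\bar N\succeq N'$ and set $p(x):=\tp_{\bar N}(c/M')$. By saturation realize $p$ by $c_1,c_2,\dots\in\bar N$ chosen with $c_{i+1}\notin\acl_{\bar N}(M'\cup\{c_1,\dots,c_i\})$. Triviality of $\acl$ in mutually algebraic theories (Theorem~\ref{thm:ma char}) together with Proposition~\ref{prop: acl} make the classes $[c_i]_\sim^{\bar N}=\acl(c_i)\setminus\acl(\emptyset)$ pairwise disjoint and disjoint from $M$; each is infinite because $p(x)$ contains, for every $n$, a formula asserting that $x$ has $n$ mates via some linked atomic conjunction pattern derived from the infinite chain $c,\beta_2,\beta_3,\dots$ in $N'$. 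Pairwise isomorphism over $M$ then follows from homogeneity of $\bar N$: any $M'$-automorphism of $\bar N$ taking $c_i$ to $c_j$ restricts to an $\LL_\A$-isomorphism $[c_i]_\sim\to[c_j]_\sim$, which is an $\LL$-isomorphism over $M$ by Fact~\ref{fact:  fact2}.

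The main obstacle is the final trimming step. Set $M^*_0:=M'\cup\bigcup_i[c_i]_\sim^{\bar N}$ equipped with the induced $\LL_\A$-structure; note that $M^*_0$ is $\acl_{\bar N}$-closed. To verify $M^*_0\preceq\bar N$ by Tarski-Vaught, suppose $\phi(x,\bbar)$ with $\bbar\in M^*_0$ is realized by some $a\in\bar N$. If $a\in\acl_{\bar N}(\bbar)\subseteq M^*_0$ we are done. Otherwise, writing $\phi$ in disjunctive normal form over $\E$-formulas (Proposition~\ref{prop: qe}) and selecting a disjunct $\phi_k$ satisfied by $a$, the dichotomy preceding Proposition~\ref{prop: qe} forces every positive conjunct of $\phi_k$ to be a quantifier-free unary $0$-definable formula and every negative conjunct to be either $0$-definable or the negation of a formula algebraic in $x$ over $\bbar$. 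Thus $\phi_k(x,\bbar)$ is equivalent to $\chi(x)\wedge(x\notin F)$ with $\chi$ a $0$-definable formula (necessarily non-algebraic, as witnessed by $a$) and $F\subseteq\acl_{\bar N}(\bbar)$ finite. Since $M\preceq\bar N$, $\chi(M)$ is infinite, so $\chi(M)\setminus F$ is non-empty and any element realizes $\phi(x,\bbar)$ in $M\subseteq M^*_0$. Therefore $M^*_0\preceq\bar N$, whence $M^*_0\succeq M'$; by Fact~\ref{fact:  fact2} the $\LL_\A$-structure $M^*_0$ corresponds to an $\LL$-structure $M^*\succeq M$ on the same universe, and the $[c_i]_\sim^{\bar N}$ are precisely the required new infinite MA-connected components of $M^*$ over $M$ (in the sense $\acl_{M^*}(c_i)\setminus M$ of Section~2.1).
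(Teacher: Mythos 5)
Your proof is essentially correct and, at the level of strategy, parallels the paper's: reduce to a finitely MA-presented associate, produce one new infinite MA-component in an elementary extension, multiply it by saturation and independence, and then cut down to an elementary extension whose universe is $M$ plus the new components. The two places where you genuinely diverge are both sound and make the argument more self-contained. First, you build the single new infinite component via an ultrapower of longer and longer Gaifman paths, rather than via the paper's explicit compactness argument with new constants coding a chain of distinct, pairwise-overlapping atomic tuples; these are interchangeable, and your version avoids the paper's bookkeeping about keeping the chain at distance $>n$ from the finitely many parameters appearing in a finite fragment, since the ultrapower handles disjointness from $M'$ automatically. Second, and more substantially, the paper disposes of the final trimming step by citing \cite[Proposition 4.2]{MA}, whereas you verify Tarski--Vaught directly for $M'\cup\bigcup_i[c_i]_\sim$ using Proposition~\ref{prop: qe} and the dichotomy on $\E$-formulas; your case split (either the witness is algebraic over the parameters, hence lies in the $\acl$-closed set $M^*_0$, or else the disjunct collapses to a non-algebraic $0$-definable unary formula minus a finite set, so a witness can be found in $M'$) is correct and internalizes that external citation.

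The one step that does not work as written is the choice of the paths $a_n=b_{n,1},\dots,b_{n,n}$: a connected component of size at least $n$ in a graph need not contain a simple path on $n$ vertices (consider a star), so $|[a_n]_\sim|\ge n$ alone does not justify their existence. The repair is exactly the observation the paper makes explicit with its constant $K$: since every atomic $\LL_\A$-relation is mutually algebraic and $\LL_\A$ is finite relational, the Gaifman graph of $M'$ has degree bounded by some $D$, so the ball of radius $n$ about any point has at most $D^{n+1}$ elements; a component of size $>D^{n+1}$ therefore contains a point at distance $n+1$ from $a_n$, and a geodesic to it is the desired simple path. Since component sizes are unbounded by Theorem~\ref{thm:finiterelational}, you may simply choose $a_n$ with $|[a_n]_\sim|>D^{n+1}$. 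With that adjustment the argument goes through.
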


\begin{proof}
As $\LL$ is finite relational, by Lemma~\ref{lem:basic} there is an expansion $M'$ of $M$ by finitely many constants with $M'$ MA-presented.  As the $\LL$-reduct of any elementary extension of $M'$ is an $\LL$-elementary extension of $M$, we may assume $M$ itself is MA-presented.
%By naming finitely many constants, we may assume $M$ is MA-presented. 
By Proposition \ref{prop:finpresented}, there is no finite bound on the size of MA-connected components in $M$.

\begin{claim}
We may find an elementary extension of $M$ adding a single new infinite MA-component.
\end{claim}
\begin{claimproof}
We proceed by compactness. Let $r$ be the maximum arity of the language $\LL$ and 
expand $\LL_M$ by new constants $\set{c_{i,j} | i \in \omega, j \in [r]}$.
As notation,  let $\cbar_i = (c_{i,1}, \dots, c_{i,r})$. Consider the following  theory:

\begin{enumerate}
\item The elementary diagram of $M$.
\item For every $i \in \omega$, $c_{i,j}$ is not equal to any $m \in M$.
\item For every $i \in \omega$, some $R \in \LL$ holds on some initial subtuple of $\cbar_i$. We now let $\cbar'_i$ denote the maximal initial subtuple such that some $R \in \LL$ holds on $\cbar'_i$.
\item For every $i \in \omega$, $\cbar'_i \cap \cbar'_{i+1} \neq \emptyset$.
\item For every $i \neq j \in \omega$, $\cbar'_i \neq \cbar'_j$.
\end{enumerate}

If this theory were satisfiable, we would get an elementary extension of $M$, and $[c_{0,0}]_\sim$ would be the desired infinite MA-connected component. So consider a finite subset $S$ of the sentences, and let $F \subset M$ be the finite set of realizations of $\LL_M$-constants mentioned in $S$. Let $n \in \omega$ be such that $n-1$ is the largest first index of any $c_{i,j}$ occurring in $S$. We must interpret $(\cbar'_0, \dots, \cbar'_{n-1})$ so that it satisfies a linked $\LL$-atomic conjunction, avoids $F$, and the subtuples are distinct. Given $x, y \in M$, let $d(x, y)$ be the minimum number of conjuncts of any $\LL$-atomic conjunction satisfied by any tuple containing $x$ and $y$ (and $d(x, y) = \infty$ if they are in different MA-connected components). Then it suffices to find some $x \in M$ such that $|[x]_\sim| > nr$ and $d(x, f) > n$ for every $f \in F$, since we may then find a tuple containing $x$ and avoiding $F$ that satisfies a linked $\LL$-atomic conjunction with $n$ conjuncts (and with the subtuples for each conjunct distinct), and may take $\cbar'_i$ to be the subtuple satisfying the $i^{th}$ conjunct.

Since $M$ is MA-presented, there is some $K \in \omega$ such that for every $m \in M$, $|\set{x \in M | d(m, x) \leq k}| \leq K^k$ for every $k \in \omega$. As there is no finite bound on the size of MA-connected components in $M$, there is some MA-connected component of size greater than $|F|\cdot K^n$, and so we may take the desired $x$ in that component. We may also need to interpret constants appearing in $\bigcup_i \set{\cbar_i \bs \cbar'_i}$, but we may choose all such points to lie in a different MA-connected component.
\end{claimproof}

With the claim in hand, we work inside a sufficiently saturated elementary extension of $M$. We may take an infinite set $\set{c_i| i \in \omega}$ of realizations of $\tp(c_{0,0}/M)$, pairwise independent over $M$. Then $\set{[c_i]_\sim| i \in \omega}$ are the desired infinite MA-connected components, pairwise isomorphic over $M$. Furthermore, by \cite[{Proposition 4.2}]{MA}, $M \prec M \cup \set{[c_i]_\sim|i \in \omega}$.
\end{proof}

%\begin{acknowledgements}
%If you'd like to thank anyone, place your comments here
%and remove the percent signs.
%\end{acknowledgements}

\section{Declarations}
% Authors must disclose all relationships or interests that 
% could have direct or potential influence or impart bias on 
% the work: 
 \subsection{Conflict of interest}
The authors declare that they have no conflict of interest.

 \subsection{Availability of data and code}
 
 The manuscript has no associated data or code.

\bibliography{Bib.bib}
% BibTeX users please use one of
%\bibliographystyle{spbasic}      % basic style, author-year citations
\bibliographystyle{spmpsci}      % mathematics and physical sciences
%\bibliographystyle{spphys}       % APS-like style for physics
%\bibliography{}   % name your BibTeX data base

% Non-BibTeX users please use
%\begin{thebibliography}{}
%
% and use \bibitem to create references. Consult the Instructions
% for authors for reference list style.
%
%\bibitem{RefJ}
% Format for Journal Reference
%Author, Article title, Journal, Volume, page numbers (year)
% Format for books
%\bibitem{RefB}
%Author, Book title, page numbers. Publisher, place (year)
% etc
%\end{thebibliography}

\end{document}